\documentclass[8pt]{article}

\usepackage[utf8]{inputenc}
\usepackage[T2A]{fontenc}
\usepackage[margin=1.2cm]{geometry}
\usepackage{mathtools}
\usepackage{authblk}

\usepackage{subfigure}

\usepackage{pgf,tikz,pgfplots}
\usetikzlibrary{arrows.meta}
\usetikzlibrary{calc}
\pgfplotsset{compat=1.15}

\usetikzlibrary{arrows,patterns}
\usepackage{float}

%\usepackage{xcolor}

%\definecolor{darkgreen}{rgb}{0,0.3,0}

%\DeclarePairedDelimiter \ceil {\left \lceil} {\right \rceil}
%\DeclarePairedDelimiter \floor{\left \lfloor}{\right \rfloor}

\usepackage{amsmath,amsthm,amssymb}
\usepackage{hyperref}

\newtheorem{theorem}{Theorem}[section]
\newtheorem{definition}[theorem]{Definition}

\newtheorem{lemma}[theorem]{Lemma}
\newtheorem{proposition}[theorem]{Proposition}
\newtheorem{corollary}[theorem]{Corollary}

\newtheorem{problem}[theorem]{Problem}

\newtheorem{question}[theorem]{Question}

\theoremstyle{remark}
\newtheorem{remark}[theorem]{Remark}
\newtheorem{example}[theorem]{Example}

\DeclareMathOperator{\Int}{Int}
\DeclareMathOperator{\conv}{conv}
\DeclareMathOperator{\Per}{Per}
\newcommand\N{\mathbb{N}}
\newcommand\R{\mathbb{R}}

\newcommand{\F}{{\mathcal F}}
\newcommand{\St}{{\mathcal St}}

        \newcommand{\HH}{{\mathcal H}}
        \renewcommand{\H}{\HH^1}
        
        \newcommand{\defeq}{:=}
        \newcommand{\forget}[1]{}

        \def\dist{\mathrm{dist}\,}
        
        \def\diam{\mathrm{diam}\,}

        \def\X{\mathrm{X}}
        \def\E{\mathrm{E}}
        \def\S{\mathcal S}

        \def\diam{\mathrm{diam}\,}

        \def\turn{\mathrm{turn}\,}

        \def\Int{\mathrm{Int}\,}

        \def\oord{\mathrm{ord}\,}

\author[1]{Danila Cherkashin}
\author[2,3]{Yana Teplitskaya}

\affil[1]{Institute of Mathematics and Informatics, Bulgarian Academy of Sciences, Sofia, Bulgaria}
\affil[2]{Mathematical Institute, Leiden University, the Netherlands}
\affil[3]{Laboratoire  de Math{\'e}matiques d’Orsay, Universit{\'e} Paris-Saclay, CNRS, Orsay, France}

\title{An overview of maximal distance minimizers problem}
\begin{document}
\maketitle

\begin{abstract}
    Consider a compact $M \subset \mathbb{R}^d$ and $l > 0$. A maximal distance minimizer problem is to find a connected compact set $\Sigma$ of the length (one-dimensional Hausdorff measure $\H$) at most $l$ that minimizes
\[
\max_{y \in M} \dist (y, \Sigma),
\]
where $\dist$ stands for the Euclidean distance.

We give a survey on the results on the maximal distance minimizers and related problems. Also we fill some natural gaps by 
showing NP-hardness of the maximal distance minimizing problem, establishing its $\Gamma$-convergence, considering the penalized form and discussing uniqueness of a solution. We finish with open questions.

\end{abstract}

\section{Introduction}

This work is devoted to solutions to the following maximal distance minimizer problem. 

\begin{problem}
For a given compact set $M \subset \R^d$ and $l > 0$ to find a connected compact set $\Sigma$ of length (one-dimensional Hausdorff measure $\H$) at most $l$ that minimizes
\[
\max_{y \in M} \dist (y, \Sigma),
\]
where $\dist$ stands for the Euclidean distance.
\label{ProblemMain}
\end{problem}

Problem~\ref{ProblemMain} with the average distance minimization problem (which is discussed later) were introduced in a very general form by Buttazzo, Oudet and Stepanov in~\cite{buttazzo2002optimal}.
They were motivated by urban transportation planning.
The aim of an urban transportation network (for instance the set of metro or street traffic routes) is to provide the easiest access of the people to their destinations, the densities of which are supposed to be given finite Borel measures. 
Such a transportation network will be modeled by a one-dimensional closed connected set. It is reasonable to suppose that getting to a destination without using urban traffic in general would cost a single citizen an effort (estimated in time required and, at the end, also in terms of respective financial loss) proportional to the actual walking distance while the cost of using the urban traffic (the price of the ticket) is independent of the actual length of the traffic route used, and, moreover, for simplicity can be assumed to be negligible with respect to the cost of a transportation without using the urban traffic.

Later Problem~\ref{ProblemMain} was stated in the current form by Miranda, Paolini and Stepanov in~\cite{miranda2006one,paolini2004qualitative}. 
Of course, we may encounter this problem (and its dual form) in different life situations. For example, you are the mayor of a city completely occupied by rats, and you need to prepare the site for the most important international sporting event. Then you want to lay cables of a minimum length that will emit an ultrasonic rodent repeller in all areas of the city where athletes and tourists will be.
%Alternatively, 

A \textit{maximal distance minimizer} is a solution to Problem~\ref{ProblemMain}.

%Such sets can be considered as networks of radiating Wi-Fi cables with a bounded length arriving to each customer (for the set $M$ of customers) at the distance $r$, where such $r$ is the smallest possible.

\subsection{Class of problems} 

Maximal distance minimization problem could be considered as a particular example of shape optimization problems. 
A shape optimization problem is a minimization problem where the unknown variable runs over a class of domains; then every shape optimization problem can be written in the form $\min F(\Sigma) :  \Sigma \in A$ where $A$ is the class of admissible domains and $F()$ is the cost function that one has to minimize over $A$. 

So for a given compact set $M$ and positive number $l\geq 0$ let the admissible set $A$ be a set of all closed connected set $\Sigma'$ with length constraint $\H(\Sigma') \leq l$; and let cost function be the \textit{energy} $F_M(\Sigma)=\max_{y \in M} \dist(y,\Sigma)$. Also $F_M(\emptyset) := \infty$.

\subsection{Dual problem}

Define the dual problem to Problem~\ref{ProblemMain} as follows.

\begin{problem}
For a given compact set $M \subset \R^d$ and $r > 0$ to find a connected compact set $\Sigma$ of the minimal length (one-dimensional Hausdorff measure $\H$) such that
\[
\max_{y \in M} \dist (y, \Sigma) \leq r.
\]
\label{TheDualProblem}
\end{problem}

The dual problem also admits a natural interpretation. Namely, suppose that we have to provide a gas supply pipeline to every house located in some area $M$ under the condition that the gas supply should reach each house at distance not greater than a given $r > 0$. The company constructing the pipeline will naturally try to minimize its length under the above restriction, which reduces to solving the Problem~\ref{TheDualProblem}.

In a nondegenerate case (i.e. for $\H(\Sigma)>0$) the primal and dual problems have the same sets of solutions for the corresponding $r$ and $l$ (see~\cite{paolini2004qualitative}) and hence an equality $F_M(\Sigma) = r$ is reached for a solution $\Sigma$ to Problem~\ref{TheDualProblem}.

\subsection{The first parallels with the average distance minimization problem}
Maximal distance minimization problem is somehow similar to another shape optimization problem: average distance minimization problem 
(see the survey of Lemenant~\cite{lemenant2011presentationENG}) and it seems interesting to compare the known results and open questions concerning these two problems. 
In the average distance minimization problem's statement the admissible set $A$ is the same as in maximal distance minimization problem, but the cost function is defined as
\[
F(\Sigma_a) = \int_{M} A(\dist(y,\Sigma_a)) d\phi(x), 
\]
where $A: \mathbb{R}^+\rightarrow \mathbb{R}^+$ is a nondecreasing function and $\phi()$ is  a finite nonnegative measure with compact nonempty support in $\mathbb{R}^d$.

Minimization problems for average distance and maximum distance functionals are used in economics and urban planning with similar interpretations. If it is required to find minimizers under the cardinality constraint $\sharp \Sigma \leq k$, instead of the length and the connectedness constraints, where $k \in \mathbb{N}$ is given and $\sharp$ denotes the cardinality, then the corresponding problems are referred to as optimal facility location problems.

\subsection{Notation}

For a given set $X \subset \mathbb{R}^d$ we denote by $\overline{X}$ its closure, by $\Int(X)$ its interior and by $\partial X$ its topological boundary.

Let $B_\rho(x)$ stand for the open ball of radius $\rho$ centered at a point $x$, and let $B_\rho(T)$
be the open $\rho$-neighborhood of a set $T$ i.e.
\[
B_\rho(T) := \bigcup_{x\in T} B_\rho(x)
\]
(in other words $B_\rho(T)$ is the Minkowski sum of a ball $B_\rho$ centered in the origin and $T$). Note that the condition
\[
\max_{y \in M} \dist (y, \Sigma) \leq r
\]
is equivalent to $M \subset \overline{B_r(\Sigma)}$.

For given points $b$, $c \in \mathbb{R}^d$ we use the notation $[bc]$, $[bc)$ and $(bc)$ for the corresponding closed line
segment, ray and line respectively. 

A \textit{regular tripod} is a union of three segments $[ax] \cup [bx] \cup [cx]$ with $\angle axb = \angle axc = \angle bxc = 2\pi/3$.
Note that a regular tripod is always coplanar.

\subsection{Existence. Absence of loops. Ahlfors regularity and other simple properties}

For the both problems the existence of solutions can be obtained in a straightforward way: according to the classical Blaschke and Go{\l}{\k{a}}b theorems, the class of admissible sets is compact for the Hausdorff distance and both of the functions (maximal distance and also the average distance) is continuous for this convergence because of the uniform convergence of $x \rightarrow \dist(x, \Sigma)$.

\begin{definition}
A closed set $\Sigma$ is said to be Ahlfors regular if there exists some constants $C_1$, $C_2>0$ and a
radius $\varepsilon_0>0$ such that $C_1\varepsilon \leq \H(\Sigma \cap B_\varepsilon(x))\leq C_2\varepsilon$ for every $x\in \Sigma$ and $\varepsilon<\varepsilon_0$.
\end{definition}

In the work~\cite{paolini2004qualitative} Paolini and Stepanov proved
\begin{itemize}
    \item  the absence of closed loops for maximum distance minimizers and, under general
conditions on $\phi$, the absence of closed loops for average distance minimizers;
    \item  the Ahlfors regularity of maximum distance minimizers (in the plane the constants are absolute) and, under the additional
summability condition on $\phi$, the Ahlfors regularity of average distance minimizers. Gordeev and Teplitskaya~\cite{gordeev2022regularity} refine Ahlfors constants of planar maximum distance minimizers to the best possible, i.e. show that $\H (\Sigma \cap B_\varepsilon(x)) = \oord_x\Sigma \cdot \varepsilon + o(\varepsilon)$,
where $\oord_x\Sigma \in \{1, 2, 3\}$. Also, Gordeev and Teplitskaya proved a ``local'' analogue in $\mathbb{R}^d$, namely the same result with $\varepsilon$ depending on $x$.

\item Recall that maximal distance minimization problem and the dual problem have the same sets of solutions (the planar case was proved before by Miranda, Paolini, Stepanov in~\cite{miranda2006one}). It particularly implies that maximal distance minimizers must have maximum available length $l$. Paolini and Stepanov also proved that average distance minimizers (with additional assumptions on $\phi$) have maximum available length.
\end{itemize}

In the work~\cite{inverse2022} the following basic results were shown.

\begin{itemize}
    \item[(i)] Let $\Sigma$ be an $r$-minimizer for some $M$. Then $\Sigma$ is an $r$-minimizer for $\overline{B_r(\Sigma)}$.
    \item[(ii)] Let $\Sigma$ be an $r$-minimizer for $\overline{B_r(\Sigma)}$. Then $\Sigma$ is an $r'$-minimizer for $\overline{B_{r'}(\Sigma)}$, where $0 < r' < r$.
\end{itemize}

\subsection{Complexity of the problem}

The corresponding discretization of Problem~\ref{ProblemMain} is NP-complete (see Problem~\ref{DMDM} and Section~\ref{sec:NP}), this means that Problem~\ref{ProblemMain} itself is NP-hard.

Even in the plane and for a general 3-point set $M = \{a,b,c\}$ a solution to Problem~\ref{ProblemMain} cannot be constructed by a straightedge (ruler) and a compass.
The reason is that for a large enough $\angle abc$ and a proper $r$ Problem~\ref{ProblemMain} coincides with Alhazen's billiard  problem (a reflection from a spherical mirror), for which the unconstructability was shown by Neumann~\cite{neumann1998reflections}. This differs the situation from the Steiner tree problem (Problem~\ref{Problem1}), see~\cite{melzak1961problem}.

\subsection{Local maximal distance minimizers}

\begin{definition} \label{def:local}
Let $M \subset \mathbb{R}^d$ be a compact set and let $r > 0$.
A closed connected set $\Sigma \subset \mathbb{R}^d$ with $\H(\Sigma)<\infty$ is called a local minimizer if $\F_M(\Sigma) \leq r$
and there exists $\varepsilon > 0$ such that for any connected set $\Sigma'$ satisfying $\F_M(\Sigma') \leq r$
and $\diam (\Sigma \triangle \Sigma') \leq \varepsilon$ the inequality $\H(\Sigma) \leq \H(\Sigma')$ holds, where $\triangle$ is the symmetric difference.
\end{definition}

Any maximal distance minimizer is also a local minimizer. Usually the regularity properties of maximal distance minimizers are also true for local maximal distance minimizers (see~\cite{gordeev2022regularity}).

 \section{Regularity}

\subsection{Tangent rays. Blow up limits in \texorpdfstring{$\R^d$}{Rd}}
\begin{definition}
We say that a ray $ (ax] $ is a \emph{tangent ray} of a set $ \Gamma \subset \mathbb{R}^d$
at the point $ x\in \Gamma $ if there exists a
sequence of points $ x_k \in \Gamma \setminus \{x\}$ such that $ x_k \rightarrow x $ and $ \angle x_kxa \rightarrow 0 $.
\end{definition}

Then we have the following regularity theorem.

\begin{theorem}[Gordeev--Teplitskaya~\cite{gordeev2022regularity}]    \label{th:GT}
Let $\Sigma$ be a minimizer for a compact set $M \subset \mathbb{R}^d$ and $r > 0$. Then there are at most three
tangent rays at any point of $\Sigma$, and the pairwise angles between the tangent rays are at least $2\pi/3$. Furthermore, tangent rays coincide with one-sided tangents, particularly the angles between one-sided tangents
cannot be equal to 0, i.e. there is one to one correspondence between tangent rays at an arbitrary point $x \in \Sigma$
and connected components of $\Sigma \setminus \{x\}$. Moreover, if $d = 2$, then $\Sigma$ is a finite union of simple curves with
one-sided tangents continuous from the corresponding side.
\end{theorem}

In works concerning average distance minimizers the notion of \textit{blow up limits} is used.  Santambrogio and Tilli in~\cite{santambrogio2005blow} proved that for any average distance minimizer blow up sequence $\Sigma_\varepsilon \defeq \varepsilon^{-1} (\Sigma_a \cap B_\varepsilon(x) \text{ -- } x)$ with $x \in \Sigma_a$, converges in $B_1(0)$ (for the Hausdorff distance) to
some limit $\Sigma_0(x)$ when $\varepsilon \rightarrow 0$, and the limit is one of the following below (see Fig.~\ref{Fig:1} which is analogues to a picture from~\cite{lemenant2011presentationENG}), up to a rotation.

\begin{figure}[h]
        \centering
            \hfill \
   \begin{tikzpicture}[scale=1.9]
    \coordinate(O) at (0,0);
    \coordinate(x1) at (-1,0);
    \coordinate(x2) at (1,0);
    
   \draw [shift={(O)}]  plot[domain=0:6.5,variable=\t]({cos(\t r)},{sin(\t r)});
    
    \draw[very thick, blue] (x1) -- (x2);

   \fill (O) circle (1pt);
   \draw (O) node[above]{$x$};
   \node[align=center] at (0,-1.5) {a general $x$ has tangent line \\ $\psi({x})=0$};
   %\\ spanning three lines\\ with manual line breaks};
 
   \end{tikzpicture}
     \hfill \
   \begin{tikzpicture}[scale=1.9]
   
    \coordinate(O) at (0,0);
    \coordinate(x1) at (-1,0);
    
   \draw [shift={(O)}]  plot[domain=0:6.5,variable=\t]({cos(\t r)},{sin(\t r)});
    
    \draw[very thick, blue] (O) -- (x1);
 \fill (O) circle (1pt);
   \draw (O) node[above]{$x$};
  \node[align=center] at (0,-1.5) {$x$ is a leaf \\$\psi({x})>0$};
\end{tikzpicture}
     \hfill \
   \begin{tikzpicture}[scale=1.9]
   
    \coordinate(O) at (0,0);
    \coordinate(x1) at ({cos(-0.2r)},{sin(-0.2r)});
    \coordinate(x2) at ({cos(3.3r)},{sin(3.3r)});
   
   \draw [shift={(O)}]  plot[domain=0:6.5,variable=\t]({cos(\t r)},{sin(\t r)});
    
    \draw[very thick, blue] (x1) -- (O) -- (x2);
 \fill (O) circle (1pt);
   \draw (O) node[above]{$x$};
       \node[align=center] at (0,-1.5)    {$x$ is a corner point\\ $\psi({x})>0$};

\end{tikzpicture}
    \hfill \
   \begin{tikzpicture}[scale=1.9]
   
    \coordinate(O) at (0,0);
    \coordinate(x1) at (1,0);
    \coordinate(x2) at ({cos(2.0943r)},{sin(2.0943r)});
    \coordinate(x3) at ({cos(4.18879r)},{sin(4.18879r)});
    
   \draw [shift={(O)}]  plot[domain=0:6.5,variable=\t]({cos(\t r)},{sin(\t r)});
    
    \draw[very thick, blue] (x1) -- (O) -- (x2);
    \draw[very thick, blue] (x3) -- (O);

   \fill (O) circle (1pt);
   \draw (O) node[left]{$x$};
   \node[align=center] at (0,-1.5)    {$x$ is a branching point\\ $\psi({x})=0$};
\end{tikzpicture}
    \hfill \ 
        \caption{All possible variants of tangent rays at any point of a maximal distance minimizer or blow up limits of an average distance minimizer}
        \label{Fig:1}
\end{figure}
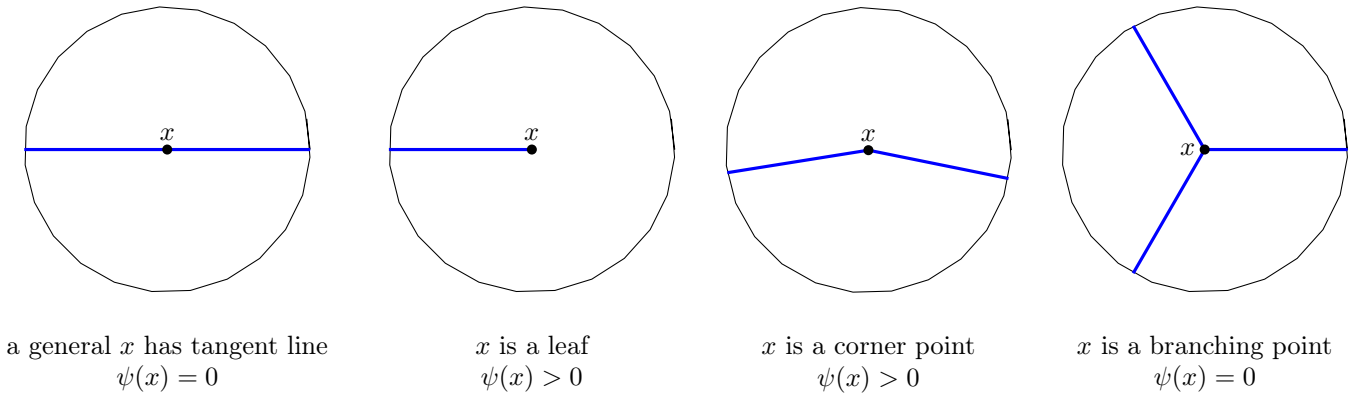

It is clear that for maximal distance minimizers blow up limits also exists and are more or less the same: $\Sigma_0$ can be a radius, a diameter, a union of two segments with the angle between the segments greater or equal to $2\pi/3$ or a center of a regular tripod. 
Thus a blow up limit always belongs to some 2-dimensional plane. 

At the second and third cases (id est when $\psi(x)>0$) the point $x$ has to be energetic; see the following definition. 

\begin{definition}
A point $x \in \Sigma$ is called \emph{energetic}, if for all $\rho>0$ one has
 	\[
		F_{M}(\Sigma \setminus B_{\rho}(x)) > F_{M}(\Sigma).
	\]
\end{definition}

Herewith if a point $x$ of a maximal distance minimizer $\Sigma$ is energetic then there exists such a point $y \in M$ (may be not unique) such that $\dist (x, y) = r$ and $B_{r}(y)\cap \Sigma=\emptyset$; such $y$ is called \textit{corresponding} to $x$.

If a point $x\in \Sigma$ is not energetic then in a sufficiently small neighbourhood it is a center of a regular tripod or a segment (and coincides there with its one-sided tangents). 

 A key object in all the study of the average distance problem is the pull-back measure of $\mu$ with respect to the projection onto $\Sigma_a$, where $\Sigma_a$ is a solution to the average distance minimizer problem. More precisely, if $\mu$ does not charge the Ridge set (which is defined as the set of all $x \in \R^d$ for which the minimum distance to $\Sigma_a$ is attained at more than one point) of $\Sigma_a$ (this is the case for instance when $\mu$ is absolutely continuous with respect to the Lebesgue
measure), then it is possible to choose a measurable selection of the projection multimap onto $\Sigma$, i.e. a map $\pi_\Sigma : M \rightarrow \Sigma$ such that $d(x, \Sigma)=d(x, \pi_{\Sigma_a})$ (this map is uniquely defined everywhere except the Ridge set). Then one can define the measure $\psi$ as
being $\psi(A):=\mu (\pi_{\Sigma_a}^{-1}(A))$, for any Borel set $A \subset M$.
In other words $\psi = \pi_{\Sigma_a} \sharp \mu$. 

For the maximal distance minimizers in $\R^d$ we can define measure $\psi$ in the similar way, but replace $M$ by $\partial B_r(\Sigma)$ and with $(n-1)$-dimensional Hausdorff measure as $\mu$ (or accordingly $\overline{B_r(\Sigma)}$ and $n$-dimensional Hausdorff measure). Thus Fig.~\ref{Fig:1} works both for maximal and average distance minimizers.

\subsection{Properties of branching points in \texorpdfstring{$\R^2$}{R2}}

Recall that by Theorem~\ref{th:GT} that for every planar compact set $M$ and a positive number $r$ a maximal distance minimizer can have only a finite number of points with $3$ tangent rays. 

In the plane it is also known (see~\cite{BS1}) that every average distance minimizer is topologically a tree composed of a finite union of simple curves joining with a number of $3$.

Every branching point of a planar maximal distance minimizer should be the center of a regular tripod.
If $x\in \Sigma \subset \R^2$ has $3$ tangent rays then there exists such a neighbourhood of $x$ in which the minimizer coincides with its tangent rays. Id est, there exists such $\varepsilon>0$ that $\Sigma \cap \overline{B_\varepsilon(x)}=[ax]\cup[bx]\cup [cx]$ where $\{a,b,c\}=\Sigma \cap \partial B_\varepsilon(x)$ and $\angle axb = \angle bxc = \angle cxa =2\pi/3$.
For planar average distance minimizers it is proved that any branching point admits such a neighbourhood in which three pieces of $\Sigma$ are $C^{1,1}$.

\subsection{Continuity of one-sided tangents in \texorpdfstring{$\R^2$}{R2}}
\begin{definition}\label{def:one-sided}
We say that the ray $ (ax] $ is a \emph{one-sided tangent} of a set $ \Gamma  \subset \mathbb{R}^d $ at a point $ x \in \Gamma $ if there exists a connected component $\Gamma_1$ of $\Gamma \setminus \{x\}$ such that $x \in \overline{\Gamma_1}$ and any sequence of points $x_k \in \Gamma_1$ with the property $x_k \rightarrow x$ satisfies $\angle x_kxa \rightarrow 0$.
In this case we also say that $(ax]$ is tangent to the connected component $\Gamma_1$.
\end{definition}
In the plane the continuity of one-sided tangents from the corresponding side holds (see~\cite{gordeev2022regularity}):
\begin{lemma}\label{zapred2}
Let $\Sigma \subset \mathbb{R}^2$ be a (local) maximal distance minimizer and let $x \in \Sigma$.
Let $\Sigma_1$ be a connected component of $\Sigma \setminus \{x\}$ with one-sided tangent $(ax]$ (it has to exist) and let $\bar x \in \Sigma_1$.
\begin{enumerate}
\item \label{zapred2:weak} For any one-sided tangent $(\bar a\bar x]$ of $\Sigma_1$ at $\bar x$ the equality $\angle((\bar a\bar x), (ax)) = o_{|\bar xx|}(1)$ holds.
\item Let $(\bar a\bar x]$ be a one-sided tangent at $\bar x$ of any connected component of $\Sigma_1 \setminus \{\bar x\}$ not containing $x$.
Then $\angle((\bar a\bar x], (ax]) = o_{|\bar xx|}(1)$.
\end{enumerate}
\end{lemma}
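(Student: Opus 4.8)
The plan is to prove the equivalent statement that the one-sided tangent direction \emph{along} $\Sigma_1$ converges, as the base point tends to $x$, to the direction of the fixed ray $(ax]$; write $u$ for the unit vector spanning $(ax]$ and $d:=|\bar x x|$. Assertion~\ref{zapred2:weak} (convergence of the supporting \emph{lines}) will follow from the ray statement together with the fact, obtained by the same argument applied to the component of $\Sigma\setminus\{\bar x\}$ containing $x$, that its one-sided tangent points in direction $-u$ up to $o_d(1)$; in particular, for an energetic (corner) $\bar x$ the two one-sided tangents make an angle $\ge 2\pi/3$, and since each is forced onto the axis the corner angle must in fact tend to $\pi$, i.e. corners \emph{flatten} as $\bar x\to x$. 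The first ingredient, used throughout, is a cone-containment consequence of the definition of a one-sided tangent: for every $\beta>0$ there is $\rho>0$ with $\Sigma_1\cap B_\rho(x)\subset\{z:\angle(z-x,u)\le\beta\}$, for otherwise some sequence $x_k\to x$ in $\Sigma_1$ would keep $\angle(x_k-x,u)$ bounded away from $0$, contradicting that $(ax]$ is tangent to $\Sigma_1$. Since a planar minimizer has only finitely many branching points, I may also shrink $\rho$ so that $B_\rho(x)$ contains no branching point other than possibly $x$, whence every $\bar x\in\Sigma_1\cap B_\rho(x)\setminus\{x\}$ carries at most two one-sided tangents (one toward $x$, at most one pointing forward), the angle between them being $\ge 2\pi/3$ exactly when $\bar x$ is energetic.

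The heart of the argument is to promote this chord information into genuine control of the tangent \emph{at} $\bar x$, and this is where minimality enters: that $\bar x-x$ is nearly parallel to $u$ does not by itself pin the tangent at $\bar x$, since the direction of an infinitesimal forward increment is not determined by the secant from $x$. Here I would use that $r$ is the only length scale at which a maximal distance minimizer bends — it is forced to turn only in order to reach the balls $\overline{B_r(y)}$ attached to its energetic points, so at scales much smaller than $r$ it is nearly straight, turning through a fixed angle costs an arc of length comparable to $r$, and at a corner (an energetic point) it deflects by at most $\pi/3$. I would then argue inside the shrinking cone: a point $\bar x\in\Sigma_1\cap B_\rho(x)$ sits at axial coordinate $\approx d$ and transversal distance $\le d\tan\beta$ from the axis $(ax]$; if the forward one-sided tangent $w$ at $\bar x$ made an angle $\alpha$ with $u$ bounded away from $0$, then, turning no faster than the curvature scale $r$ permits and deflecting by at most $\pi/3$ at any single corner, realigning the forward arc with the axis would force a transversal excursion equal to a fixed fraction of $r$, whereas the cone, at the axial coordinate where realignment is completed, still has transversal half-width only of order $r\alpha\beta$; for $\beta$ small this is incompatible unless $\alpha$ is itself small. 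As $\beta>0$ is arbitrary with $\rho=\rho(\beta)\to0$, this gives $\alpha=o_d(1)$, which is the second assertion; feeding it into the corner analysis above yields Assertion~\ref{zapred2:weak}.

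The \emph{main obstacle} is precisely this turning control, and its genuinely delicate case is an accumulation of energetic corners on $\Sigma_1$ at $x$: an isolated corner deflects by at most $\pi/3$, but a priori infinitely many could conspire so that the tangent oscillates instead of converging, and it is exactly this that must be excluded. Equivalently, the whole proof can be phrased as a blow-up at $x$: rescaling by $d^{-1}$ sends the radius $r$ to $r/d\to\infty$, so the rescaled minimizers flatten and converge, in the Hausdorff distance, to a single straight ray in direction $u$ near the image of $\bar x$ (which lies at distance $1$ from the origin, hence away from the branch point, where all four admissible blow-up profiles reduce to one straight ray). The obstacle then becomes upgrading this Hausdorff convergence to convergence of \emph{tangents}, for which one again invokes the fact that below the scale $r$ the minimizer is nearly straight together with the minimality-driven absence of small-scale oscillation. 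Either way, minimality, Ahlfors regularity and the $2\pi/3$ angle condition are the tools that convert cone-containment into the asserted continuity of one-sided tangents.
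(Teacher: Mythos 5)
Your preparatory steps are sound: the cone containment $\Sigma_1\cap B_\rho(x)\subset\{z:\angle(z-x,u)\le\beta\}$ does follow from the definition of a one-sided tangent by a compactness argument, and shrinking $\rho$ to avoid branching points is legitimate since a planar (local) minimizer has finitely many of them. You also correctly isolate the heart of the matter: chord/cone information does not pin down the tangent at $\bar x$, and the real danger is an accumulation of energetic corner points whose deflections make the one-sided tangents oscillate. The gap is that the step you offer to overcome this rests on a false claim. You assert that realigning the forward arc with the axis ``would force a transversal excursion equal to a fixed fraction of $r$'' because ``turning through a fixed angle costs an arc of length comparable to $r$''. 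That is only true for the smooth pieces of $\Sigma$; at a corner point (an isolated energetic point) the minimizer turns by up to $\pi/3$ instantaneously, at zero length and zero transversal cost, and two or three corners in immediate succession realign any admissible deflection. Since corners can accumulate at a point --- the survey's own Theorem~\ref{theocornerpoints} exhibits a minimizer with infinitely many corner points converging to $A_\infty$ --- nothing in your argument prevents the forward arc from leaving $\bar x$ at a fixed angle $\alpha$, travelling a length far smaller than $\beta d$, and cornering back into the cone; the claimed incompatibility between an excursion of order $r$ and the cone half-width never materializes. In short, your quantitative step excludes only the scenario (slow smooth turning) that was never problematic, while the problematic scenario (corner accumulation) is excluded only by assertion; likewise, the ingredient ``below the scale $r$ the minimizer is nearly straight'' is not an available fact but essentially the statement being proved, so both your cone argument and your blow-up variant (where you invoke ``minimality-driven absence of small-scale oscillation'') are circular at exactly the decisive point.

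What a correct argument must actually use --- and what the survey states this lemma without proof relies on in the cited regularity paper --- is the structure that minimality attaches to corners: every energetic point $z$ carries a point $y\in M$ with $|zy|=r$ and $B_r(y)\cap\Sigma=\emptyset$, and limits of such corresponding points correspond to the limit point (the Remark in the Tools section). It is these empty balls of the fixed radius $r$, together with the turn bound $|\turn|<2\pi$ for arcs avoiding such a ball and the sharp density estimate $\H(\Sigma\cap B_\varepsilon(z))=\oord_z\Sigma\cdot\varepsilon+o(\varepsilon)$, that force the corner deflections near $x$ to be small in a summable way and hence the one-sided tangents to converge: a corner of definite angle close to $x$ would place an empty $r$-ball in a position incompatible with $\Sigma_1$ both approaching $x$ inside the cone and continuing past the corner. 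You name these objects in passing but never deploy them; as it stands the proposal identifies the right difficulty without containing a proof of it.
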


For planar average distance minimizers it is proved (see~\cite{lemenant2011presentationENG}) that away from branching points an average distance minimizer $\Sigma_a$ is locally at least as regular as the graph of a convex function, namely that the Right and Left tangent maps admit some Right and Left limits at every point and are semicontinuous. More precisely, for a given parametrization $\gamma$ of an injective Lipschitz arc $\Gamma \subset \Sigma_a$, by existence of blow up limits one can define the Left and Right tangent half-lines at every point $x \in \Gamma$ by 
\[
T_R(x) := x+\R^+.\lim_{h \rightarrow 0}\frac{\gamma(t_0+h)-\gamma(t_0)}{h}
\]
and
\[
T_L(x) := x+\R^+.\lim_{h \rightarrow 0}\frac{\gamma(t_0-h)-\gamma(t_0)}{h}.
\]
Then the following planar theorem for average distance minimizers holds.

\begin{theorem}[Lemenant,~2011~\cite{lemenant2011regularity}]
Let $\Gamma \subset \Sigma_a$ be an open injective Lipschitz arc. Then the Right and Left tangent maps $x \to T_R(x)$ and $x \to T_L(x)$ are semicontinuous, id est for every $y_0 \in \Gamma$ there holds $\lim_{y \to y_0; y<_\gamma y_0}T_L(y)=T_L(y_0)$ and $\lim_{y \to y_0; y>_\gamma y_0}T_R(y)=T_R(y_0)$.
In addition the limit from the other side exists and we have $\lim_{y \to y_0; y>_\gamma y_0}T_L(y)=T_R(y_0)$ and $\lim_{y \to y_0; y<_\gamma y_0}T_R(y)=T_L(y_0)$.
\end{theorem}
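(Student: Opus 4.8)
The plan is to reduce all four limit identities to a single regularity fact — that along $\Gamma$ the tangent-direction map admits one-sided limits at every point — and then to prove that fact from the optimality of $\Sigma_a$. First I would fix the parametrization $\gamma$ and write $\tau(t)$ for the direction of the tangent to $\Gamma$ at $\gamma(t)$, oriented along increasing parameter. This $\tau(t)$ is well defined at each point because the blow-up limit exists there (Santambrogio--Tilli, \cite{santambrogio2005blow}); at an interior point of $\Gamma$ that is not a branch point the blow-up is a line or a corner, never a tripod, so $T_R(\gamma(t))$ is the ray in the outgoing direction $\tau(t^{+})$ and $T_L(\gamma(t))$ the ray in the incoming direction $\tau(t^{-})$. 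If $\tau$ is \emph{regulated}, i.e.\ its one-sided limits exist at every point, then for each $t_0$ one has $\lim_{t\downarrow t_0}\tau(t^{\pm})=\tau(t_0^{+})$ and $\lim_{t\uparrow t_0}\tau(t^{\pm})=\tau(t_0^{-})$; reading these off directly yields $\lim_{y\downarrow y_0}T_R(y)=T_R(y_0)$ and $\lim_{y\uparrow y_0}T_L(y)=T_L(y_0)$, together with the two crossed identities $\lim_{y\downarrow y_0}T_L(y)=T_R(y_0)$ and $\lim_{y\uparrow y_0}T_R(y)=T_L(y_0)$. Hence the whole theorem is equivalent to the single assertion that $\tau$ possesses one-sided limits everywhere.

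The heart of the matter is therefore to control the turning of $\Gamma$. I would extract a one-signed curvature estimate from the minimality of $\Sigma_a$: perturbing a short sub-arc in the normal direction and writing out the first variation of $\int_M A(\dist(y,\Sigma_a))\,d\mu$, the Euler--Lagrange condition ties the distributional curvature of $\Gamma$ to the pushforward measure $\psi=\pi_{\Sigma_a}\sharp\mu$, which is carried on the side of $\Gamma$ that it serves. Because $\psi\ge 0$, the curve is pulled only toward that side, so on a sufficiently short sub-arc the signed curvature keeps a fixed sign and $\tau$ is monotone there. Avoiding distributions, the same conclusion can be reached by competitor: were $\tau$ to oscillate on arbitrarily small scales near some $t_0$, one could replace the wiggling sub-arc $\gamma([t_1,t_2])$ by a straighter connected competitor of strictly smaller length while perturbing $\dist(\cdot,\Sigma_a)$, and hence the functional, by a controlled amount dominated by the length saved, so that the freed length could be spent to decrease the functional strictly — contradicting the optimality of $\Sigma_a$. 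Local monotonicity of $\tau$, or more weakly the fact that the total turning of $\Gamma$ is finite on every compact sub-arc (a consequence of the finiteness of $\psi$), forces $\tau\in\mathrm{BV}_{\mathrm{loc}}$ and in particular makes $\tau$ regulated, closing the reduction.

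The step I expect to be the main obstacle is precisely this curvature estimate. One must justify the first variation of the distance functional for merely Lipschitz competitors and control the error terms as $|t_2-t_1|\to 0$, and one must confirm that $\psi$ genuinely concentrates on a single side of the arc, so that the curvature sign is fixed and not merely bounded. The delicate regime is near the endpoints of $\Gamma$ and near branch points, where $\psi$ may concentrate and the blow-up can degenerate to a tripod; the hypothesis that $\Gamma$ is an \emph{open} injective Lipschitz arc is exactly what confines the analysis to a single regular branch, on which the blow-up is a line or a corner and the monotone-turning mechanism is available. I would expect the whole scheme to parallel, for the average-distance functional, the continuity of one-sided tangents already established for maximal distance minimizers in Lemma~\ref{zapred2}, the only structural difference being that here the geometric pull responsible for the turning bound comes from the diffuse served measure $\psi$ rather than from the corresponding points of $M$ that pin the energetic points of $\Sigma$.
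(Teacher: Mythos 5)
First, a structural remark: the survey you are arguing against contains no proof of this statement at all --- it is quoted from Lemenant~\cite{lemenant2011regularity} solely to state the corollary that follows it --- so there is no ``paper's proof'' to match; I can only assess your argument on its own terms. Your first paragraph (the reduction) is fine: for a Lipschitz arc the secant-defined half-lines $T_R,T_L$ coincide with one-sided essential limits of $\gamma'$ whenever those exist, because difference quotients are averages of $\gamma'$; hence all four identities do follow once the direction map $\tau$ is regulated. The genuine gap is in the mechanism you propose to make $\tau$ regulated. Your primary claim --- that $\psi$ ``is carried on the side of $\Gamma$ that it serves,'' so the curve ``is pulled only toward that side,'' so the signed curvature has locally constant sign and $\tau$ is locally monotone --- is false. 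The measure $\psi=\pi_{\Sigma_a}\sharp\mu$ lives on $\Gamma$ itself, and the mass it accounts for generally sits on \emph{both} sides of the arc (the typical situation is $\mu$ absolutely continuous with two-dimensional support surrounding $\Sigma_a$). The pulling directions $(y-\pi_{\Sigma_a}(y))/|y-\pi_{\Sigma_a}(y)|$ then have normal components of both signs, the first variation imposes no sign condition on the curvature, and average distance minimizers are not locally convex: corner points may bend either way. The phrase ``at least as regular as the graph of a convex function'' in the literature refers to the regularity \emph{class} (existence and semicontinuity of one-sided tangents), not to any actual convexity or monotone turning, so the mechanism your argument leans on first is simply not available.

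Your fallback --- total turning locally finite ``as a consequence of the finiteness of $\psi$,'' hence $\tau\in \mathrm{BV}_{\mathrm{loc}}$, hence regulated --- is the right kind of statement (the correct heuristic is a distributional bound of the form $|d\tau/ds|\le C\,\psi$ along the open arc), but in your write-up it is asserted, not proved, and it is exactly the hard step of the theorem. To obtain it one must justify a first variation for a merely Lipschitz arc and a merely nondecreasing $A$ (the functional need not be differentiable), deal with $\mu$ possibly charging the ridge set (where the projection is multivalued), and handle the length constraint: either prove the Lagrange multiplier is nondegenerate or pass through the penalized reformulation and justify the passage. Your competitor variant does not close this gap either: replacing an oscillating sub-arc by a straighter one decreases length but can \emph{increase} $\int_M A(\dist(y,\cdot))\,d\mu$, and for the length-constrained problem a shorter competitor with larger energy is not a contradiction. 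You would need to show the saved length can be re-invested elsewhere to decrease the energy by strictly more than the loss, and no such re-investment estimate is given (nor is one cheap: how much a small added piece helps depends quantitatively on $A$ and $\mu$). So you have identified the right target (one-sided limits of $\tau$) and the right controlling object ($\psi$), but the central estimate tying the turning of $\Gamma$ to $\psi$ is missing, and the one-sidedness claim offered in its place is incorrect.
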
 
An immediate consequence of the theorem is the following corollary: 
\begin{corollary}
Assume that $\Gamma \subset \Sigma$ is a relatively open subset of $\Sigma$ that contains no corner points nor branching points. Then $\Gamma$ is locally a $C^1$-regular curve.
\end{corollary}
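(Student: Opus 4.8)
The plan is to produce, around each point of $\Gamma$, an arc-length parametrization whose derivative exists everywhere and is continuous; a curve admitting a continuous unit tangent is by definition $C^1$-regular, so this suffices.

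First I would fix $x \in \Gamma$ and reduce to a model in which $\Gamma$ is an injective Lipschitz arc through $x$. Since $\Gamma$ contains no branching points, $x$ carries at most two one-sided tangent rays, and since it is not a corner these two rays are opposite, so $\Sigma$ has a genuine tangent line $(ax)$ at $x$. Using the absence of closed loops together with Ahlfors regularity, on a small ball $B_\rho(x)$ the set $\Sigma \cap B_\rho(x)$ is a single injective arc; combined with the thin-cone control coming from the existence of the tangent line, this arc is a Lipschitz graph over $(ax)$. Hence on a relatively open piece of $\Gamma$ we obtain an injective Lipschitz parametrization $\gamma$, and the half-lines $T_R(y)$ and $T_L(y)$ are well defined for every $y$ in this piece.

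Next I would invoke the semicontinuity of the one-sided tangent maps (the theorem above for $\Sigma_a$, respectively Lemma~\ref{zapred2} in the maximal-distance case) to control how the tangent directions vary. Writing $\tau(t)$ for the forward unit tangent direction of $\gamma$, semicontinuity gives $\lim_{t \to t_0^+}\tau(t)=\tau(t_0)$ directly. For the limit from the other side I would use the cross-side statement of the theorem together with the hypothesis that $y_0$ is not a corner: because $T_L(y_0)$ and $T_R(y_0)$ then span the same tangent line, the left limit of the forward tangent also equals $\tau(t_0)$. Thus $\tau$ is continuous at every interior point of the arc, and at a possible endpoint (a leaf) the one remaining one-sided limit still exists.

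Finally I would convert continuity of $\tau$ into $C^1$-regularity. Since $\gamma$ is parametrized by arc length and no point is a corner, the left and right derivatives of $\gamma$ coincide at each $t_0$, so $\gamma'(t_0)=\tau(t_0)$ exists; continuity of $\tau$ then yields $\gamma \in C^1$ with $|\gamma'|\equiv 1$, which is exactly $C^1$-regularity. The main obstacle I expect is the first step: turning the purely infinitesimal data (existence and classification of tangent rays, no corner, no branching) into the statement that $\Gamma$ is genuinely a single injective Lipschitz arc on a neighbourhood, rather than merely a set with good tangents. Once the Lipschitz-arc picture and its arc-length parametrization are in place, the passage from semicontinuity of one-sided tangents to continuity of $\tau$, and hence to $C^1$, is routine.
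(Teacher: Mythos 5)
Your proposal is correct and takes essentially the paper's route: the paper presents this corollary as an immediate consequence of the preceding semicontinuity theorem, which is exactly your second and third steps (absence of corners makes $T_L(y_0)$ and $T_R(y_0)$ span a single tangent line, and semicontinuity together with the cross-side limits then upgrades the one-sided tangent maps to a continuous tangent, hence $C^1$). Your first step, reducing to a local injective Lipschitz arc, is work the paper absorbs into the structure results it has already cited (a planar minimizer is a finite union of simple curves with finitely many branching points), so it is legitimate background rather than a gap, though as sketched it needs the tangent-control lemma (Lemma~\ref{zapred2}) at nearby points, not just Ahlfors regularity, to get the Lipschitz-graph picture.
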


\subsection{Planar example of infinite number of corner points}
Recall that each maximal distance minimizer in the plane is a finite union of simple curves. These curves should have continuous one-sided tangents but do not have to be $C^1$: there exists a minimizer with an infinite number of points without tangent lines. 
The following example is provided in~\cite{inverse2022}.

Fix positive reals $r$, $R$ and let $N$ be a large enough integer. Consider a sequence of points $\{a_i\}_{i=1}^\infty$ chosen from the circumference $\partial B_R(o)$ such that
$N \cdot |a_2a_1|=r$, 
\[
|a_{i+1}a_{i+2}| = \frac{1}{2}|a_ia_{i+1}|
\]
and $\angle a_ia_{i+1}a_{i+2} > \frac{\pi}{2}$ for every $i \in \mathbb{N}$ (see Fig.~\ref{Fig:cornerexample}). Let $a_\infty$ be the limit point of $\{a_i\}$. Finally, let $a_{\infty+1}$ be the point in the tangent line to $B_r(o)$ at $a_\infty$, such that 
\[
|a_\infty a_{\infty+1}| = r/N.
\]

We claim that polyline 
\[
\Sigma = \bigcup_{i=1}^{\infty} [a_ia_{i+1}]
\]
is a unique maximal distance minimizer for the following $M$.

Let $v_1 \in (a_1a_2]$ be such point that $|v_1a_1| = r$.
For $i\in \mathbb{N}\cup\{\infty\} \setminus \{1\}$ define $v_i$ as the point satisfying $|v_ia_i|=r$ and $\angle a_{i-1}a_iv_{i}=\angle a_{i+1}a_iv_{i}>\pi/2$. Define $v_\infty$ as the limit point of $\{v_i\}$.
Finally, let $v_{\infty + 1}$ be such point that $v_{\infty + 1}a_\infty \perp v_\infty a_\infty$ and $|v_{\infty+1} a_\infty| = r$. 
Clearly $M := \{v_i\}_{i=1}^{\infty+1}$ is a compact set.

\begin{figure}[h]
        \centering
            \hfill  
   \begin{tikzpicture}[scale=15]
%    \clip(-3.5807730202581745, 3.524805254013829) rectangle (2.108658139067299,6.572048450771419);
    
    \coordinate(O) at (0,0);
    \coordinate(x1) at ({cos(2r)},{sin(2r)});
    \coordinate(x2) at ({cos(1.6r)},{sin(1.6r)});
    \coordinate(x3) at ({cos(1.4r)},{sin(1.4r)});
    \coordinate(x4) at ({cos(1.3r)},{sin(1.3r)});
    \coordinate(x5) at ({cos(1.25r)},{sin(1.25r)});
    \coordinate(x6) at ({cos(1.225r)},{sin(1.225r)});
    \coordinate(xN) at ({cos(1.15r)},{sin(1.15r)});

    \foreach \x in{1,2,...,6}{
    \coordinate(y\x) at ($(x\x)!-0.1!(O)$);
    }    
    \coordinate(yy) at ($(xN)!-0.1!(O)$);
    \coordinate(yN) at ($(xN)!-2.2!(x6)$);
    \coordinate(xNN) at ($(xN)!-0.8!(x6)$);
    \coordinate(y0) at ($(x1)!-0.28!(x2)$);

    \draw [shift={(O)}]  plot[domain=1:2.141,variable=\t]({cos(\t r)},{sin(\t r)});
    
    \foreach \x in{2,3,...,6}{
    \draw[dotted] (x\x) -- (y\x);
    }
    \draw[dotted] (x1) -- (y0);
    \draw[dotted] (xN) -- (yN);
    \draw[dotted] (xN) -- (yy);

    \foreach \x in{2,3,...,5}{
            \draw (y\x) node[above]{$v_\x$};
            \edef\y{\x}
            \pgfmathparse{\y+1}
            \edef\y{\pgfmathresult}
        }

    \foreach \x in{2,3,...,5}{
            \draw [dotted, shift={(y\x)}]  plot[domain={3.4-\x*0.1}:{6.3-\x*0.05},variable=\t]({cos(\t r)*0.1},{sin(\t r)*0.1});
        }

    \foreach \x in{1,2,...,5}{
            \draw (x\x) node[above]{$a_\x$};
            \edef\y{\x}
            \pgfmathparse{\y+1}
            \edef\y{\pgfmathresult}
            \draw [ultra thick, blue] (x\x) -- (x\y);
        }

    \draw [ultra thick, blue] (xN) -- (xNN);

    \draw (x6) node[above right]{$a_6$};
    \draw (xN) node[below]{$a_{\infty}$};
    \draw (y6) node[above right]{$v_6$};
    \draw (y0) node[above right]{$v_1$};
    \draw (yy) node[above right]{$v_\infty$};
    \draw (yN) node[above right]{$v_{\infty+1}$};
    \draw (xNN) node[above right]{$a_{\infty+1}$};
    
    \draw [line width=2pt, dash dot, blue] (x6) -- (xN);
    \draw [line width=2pt, dash dot] (y6) -- (yy);

    \fill [blue] (xN) circle (0.2pt);
    \fill[blue] (xNN) circle (0.2pt);
    
    \foreach \x in{1,2,...,6}{
        \fill [blue] (x\x) circle (0.2pt);
    }
    \foreach \x in{2,...,6}{
        \fill (y\x) circle (0.2pt);
    }
    \fill (y0) circle (0.2pt);
    \fill (yN) circle (0.2pt);
    \fill (yy) circle (0.2pt);
    
\end{tikzpicture}
    \hfill \ 
        \caption{The example of a minimizer with infinite number of corner points}
        \label{Fig:cornerexample}
\end{figure}
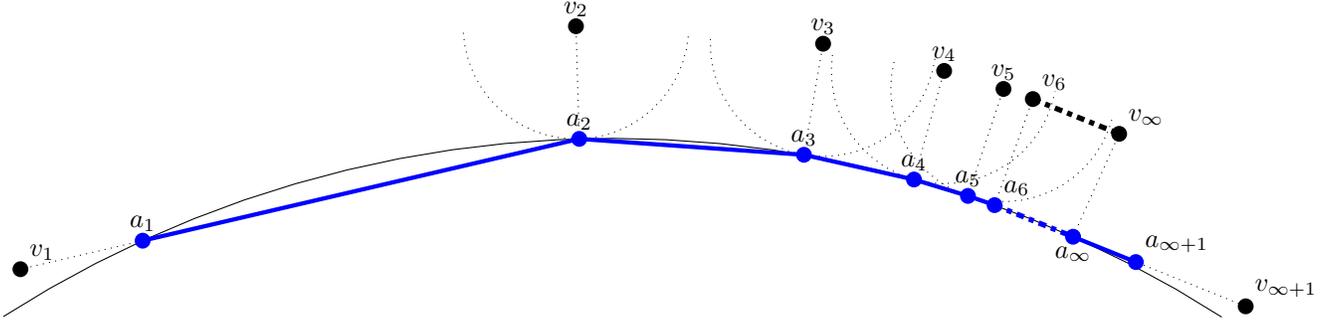

\begin{theorem}[Basok--Cherkashin--Teplitskaya, 2022~\cite{inverse2022}] \label{theocornerpoints}
Let $\Sigma$ and $M$ be defined above. Then $\Sigma$ is a unique maximal distance minimizer for $M$.
\end{theorem}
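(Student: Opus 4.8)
The plan is to pass to the equivalent dual formulation (Problem~\ref{TheDualProblem}): since the two problems have the same solutions, it suffices to show that $\Sigma$ is the \emph{unique} shortest connected compact set $\Sigma'$ with $M \subset \overline{B_r(\Sigma')}$, i.e.\ the unique shortest continuum meeting every closed ball $\overline{B_r(V_i)}$. The first, routine, step is to compute the energy and check $F_M(\Sigma)=r$. For each finite $i$ the point $V_i$ lies on the external bisector at $A_i$, and the conditions $\angle A_{i-1}A_iV_i=\angle A_{i+1}A_iV_i>\pi/2$ say precisely that on each of the two segments incident to $A_i$ the foot of the perpendicular from $V_i$ falls beyond $A_i$; hence the nearest point of $[A_{i-1}A_i]\cup[A_iA_{i+1}]$ to $V_i$ is the vertex $A_i$, at distance $|V_iA_i|=r$. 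One then checks that all non-incident segments are strictly farther from $V_i$ (using that the $A_j$ lie on $\partial B_R(0)$ while the $V_i$ lie outside), and treats the endpoints separately: $A_1$ is a leaf handled via the convention $A_0:=V_0$, and the accumulation point $A_\infty$ is pinned by the pair $V_\infty,V_{\infty+1}$. This gives $\dist(V_i,\Sigma)=r$ for all $i$, so $F_M(\Sigma)=r$ and $\H(\Sigma)=\sum_{i\ge1}|A_iA_{i+1}|$.

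The conceptual core is to recognise the construction as a \emph{taut string} touring an ordered family of disks. Fix a continuum $\Sigma'$ with $M\subset\overline{B_r(\Sigma')}$ and, for each $i$, a point $p_i\in\Sigma'\cap\overline{B_r(V_i)}$ (nonempty by the covering condition). A connected set containing the points $\{p_i\}$ has $\H$ at least the length of a shortest tree spanning them, so
\[
\H(\Sigma')\ \ge\ \min\Bigl\{\,\mathrm{SteinerTree}(\{q_i\})\ :\ q_i\in\overline{B_r(V_i)}\,\Bigr\}.
\]
Because the $V_i$ (hence the disks) are arranged monotonically along the arc, the minimising configuration is a simple path visiting the disks in order, and at each interior touch point $q_i\in\partial B_r(V_i)$ the Fermat/ellipse optimality condition for $\min_{q}\bigl(|q_{i-1}q|+|qq_{i+1}|\bigr)$ holds: the two incident segments make equal angles with the radius $V_iq_i$. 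This is exactly the defining property of $V_i$, and since the chord $[A_{i-1}A_{i+1}]$ misses the open ball $B_r(V_i)$ (again by the obtuse-angle hypotheses) the minimiser touches each disk at a single point, namely $A_i$, and runs straight in between. Thus the minimum equals $\sum_i|A_iA_{i+1}|=\H(\Sigma)$, attained uniquely by $q_i=A_i$.

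Combining the two steps proves both minimality and uniqueness at once: any $\Sigma'$ covering $M$ has $\H(\Sigma')\ge\H(\Sigma)$, and in the equality case every inequality above is tight, forcing $p_i=A_i$, a spanning tree that neither branches nor detours, and no excess length in $\Sigma'$; hence $\Sigma'=\Sigma$ as sets.

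The main obstacle will be the accumulation at $A_\infty$: there are infinitely many touch constraints piling up at a single limit point, so one must show that the taut-string lower bound remains exact in the limit and, crucially, that $\Sigma'$ is actually forced to reach $A_\infty$ (this is the role of the extra point $V_{\infty+1}$, perpendicular to $OA_\infty$, which together with $V_\infty$ pins the terminal behaviour of the path). Subsidiary technical points are making the continuum-to-tree lower bound rigorous for an arbitrary compact connected $\Sigma'$ (rather than a nice curve), ruling out that a branched competitor could be shorter, and verifying the uniqueness of each optimal touch point. I expect the limit-point analysis, rather than the taut-string optimality itself, to be where the real work lies.
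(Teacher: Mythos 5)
Your reduction to the dual formulation (Problem~\ref{TheDualProblem}) is legitimate, and the two computational ingredients you isolate are correct and are indeed why the construction works: each $V_i$ has $A_i$ as its unique nearest point of $\Sigma$ at distance exactly $r$ (so $F_M(\Sigma)=r$), and the defining property $\angle A_{i-1}A_iV_i=\angle A_{i+1}A_iV_i$ is precisely the reflection/first-order condition for a path constrained to meet $\overline{B_r(V_i)}$. Note that this survey does not reproduce a proof of Theorem~\ref{theocornerpoints} (it is delegated to~\cite{inverse2022}); the closest material here is the convexity argument in the Tools section, which makes rigorous exactly the part of your argument that is sound: for a \emph{fixed} tree topology, the length of a representation with vertices $q_i\in\overline{B_r(V_i)}$ is a convex function of the configuration, so your first-order (equal-angle) condition at $(A_i)$ certifies a global minimum \emph{within the in-order path topology}, and an equality analysis along segments of configurations can give uniqueness there.

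The genuine gap is the sentence ``Because the $V_i$ (hence the disks) are arranged monotonically along the arc, the minimising configuration is a simple path visiting the disks in order.'' Nothing forces this, and it is not a subsidiary point --- it is essentially the whole theorem. Look at the scales: $|A_iA_{i+1}|=r/(N2^{i-1})$ with $N$ large, so all the ``middle'' centers $V_1,\dots,V_\infty$ lie within a set of diameter about $2r/N\ll r$; consequently the disks $\overline{B_r(V_i)}$ overlap massively (all middle disks share a common lens of diameter close to $2r$), and in fact only $\overline{B_r(V_0)}$ and $\overline{B_r(V_{\infty+1})}$ are disjoint. Hence there is no a priori ordering or separation structure on a competitor: the points $p_i\in\Sigma'\cap\overline{B_r(V_i)}$ may sit far from the corners $A_i$, the minimal spanning tree of such points may branch or connect them in a completely different order, and your local ellipse-tangency argument only applies after one already knows the competitor bends near each $A_i$ --- which is what has to be proved. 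That this step carries real content is visible in the paper's finite inverse theorem (Section 3.1), where the analogous statement is not proved but \emph{assumed} as a hypothesis (every Steiner tree for terminals in the $2r$-neighbourhood of $A$ has the same topology as $\St$); for the present $M$ such a topology-rigidity statement must be established for this specific geometry, and moreover uniformly in the infinitely many constraints accumulating at $A_\infty$, where the equality-case (rigidity) analysis needed to conclude $\Sigma'=\Sigma$ as sets, not merely $\H(\Sigma')\geq\H(\Sigma)$, is also delicate. Your proposal names these obstacles honestly, but as written it establishes minimality only against competitors already assumed to have the structure of $\Sigma$.
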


\subsection{Every \texorpdfstring{$C^{1,1}$}{C11}-smooth simple curve is a minimizer}

For planar average distance minimizers Tilli proved in~\cite{tilli2010some} that any simple $C^{1,1}$-curve is a minimizer for a proper input. 
Paper~\cite{inverse2022} generalizes Tilli's result on $d$-dimensional space. The same statement with a similar but much simpler explanation is true for maximal distance minimizers.

\begin{theorem}[Basok--Cherkashin--Teplitskaya, 2022~\cite{inverse2022}] \label{Th:C11isaminimizer}
Let $\gamma \subset \mathbb{R}^d$ be a simple $C^{1,1}$-curve. Then $\gamma$ is a maximal distance minimizer for a small enough $r$ and $M=\overline{B_r(\gamma)}$.
\end{theorem}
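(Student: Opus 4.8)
The plan is to work through the dual formulation. Since $M=\overline{B_r(\gamma)}$ we have $F_M(\gamma)=r$ by construction, so $\gamma$ is admissible with length $\H(\gamma)$; because in the nondegenerate case the strict and dual problems share their solution sets, it suffices to prove that $\gamma$ has least length among all connected compact $\Sigma'$ that $r$-cover $M$, i.e. with $M\subset\overline{B_r(\Sigma')}$. First I would fix the geometry. A $C^{1,1}$ curve has bounded curvature, hence positive reach $R_0>0$; choosing $r<R_0$ guarantees that the nearest-point projection $\pi\colon M\to\gamma$ is well defined and that $M$ is a genuine tube, the image of $[0,L]\times[-r,r]$ under $(s,t)\mapsto\gamma(s)+t\,\nu(s)$ capped by two half-disks at the endpoints, where $\nu$ is the unit normal and $L:=\H(\gamma)$. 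Parametrise $\gamma$ by arc length.

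The engine for the lower bound is a covering-forces-crossing argument. For each $s$ the two antipodal boundary points $P_s^{\pm}:=\gamma(s)\pm r\nu(s)$ lie in $M$, so $\Sigma'$ must contain points $u_s^{\pm}$ with $\dist(u_s^{\pm},P_s^{\pm})\le r$. The two closed balls $\overline{B_r(P_s^{+})}$ and $\overline{B_r(P_s^{-})}$ have centres at distance exactly $2r$, hence meet only at the midpoint $\gamma(s)$, each contained in its own closed half-plane bounded by the tangent line $\ell_s$ at $\gamma(s)$. Therefore $u_s^{+}$ and $u_s^{-}$ lie on opposite sides of $\ell_s$, and connectedness of $\Sigma'$ forces a crossing point $q_s\in\Sigma'\cap\ell_s$ with $\dist(q_s,\gamma(s))\le 2r$; the two caps force $\Sigma'$ to reach both endpoints. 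Using the regularity theory recalled above (no loops, finitely many branch and corner points, order at most $3$, so that a competing minimizer is a finite tree) one arranges the $\{q_s\}$ along a monotone path of $\Sigma'$, and summing the chords $\lvert q_{s_{i+1}}-q_{s_i}\rvert$ over a partition and refining the mesh yields $\H(\Sigma')\ge(1-o_r(1))L$.

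The main obstacle is precisely upgrading this to the \emph{sharp} inequality $\H(\Sigma')\ge L$. A crude estimate through $\pi$ only ever gives a bound of the form $\tfrac{R_0-r}{R_0}L$, and the crossing argument gives $(1-o_r(1))L$; neither certifies optimality of $\gamma$, because the $O(r)$ error in pinning each $q_s$ to $\gamma(s)$ does not telescope over a fine partition. I expect the sharp constant to come from one of two refinements. Either one strengthens the tangent-ball rigidity to force the crossings onto $\gamma$ itself, pinning $\pi$-fibres and removing the $O(r)$ slack; or, more cleanly, one invokes item (i) of the basic results, by which it is enough to exhibit \emph{some} datum $M_0$ for which $\gamma$ is an $r$-minimizer, the conclusion then transferring automatically to the tube $\overline{B_r(\gamma)}$. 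For the latter I would take $M_0$ to be a finite set of points at distance $r$ from $\gamma$, each chosen to be energetic-corresponding to a prescribed point of $\gamma$, so that the explicit forcing construction of Theorem~\ref{theocornerpoints} applies almost verbatim; the regularity results again serve to guarantee that any competitor is a finite tree whose leaves sit at the forced endpoints, ruling out shortcuts.
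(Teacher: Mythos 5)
Your setup (dual formulation, positive reach, tube structure) is fine, but the proof never arrives: as you yourself concede, the crossing/chord-summing engine only yields $\H(\Sigma')\ge(1-o_r(1))\,\H(\gamma)$, and the entire content of the theorem is the sharp bound $\H(\Sigma')\ge\H(\gamma)$. Neither of your two proposed repairs closes this gap. The first (``force the crossings onto $\gamma$ itself'') is a restatement of the difficulty rather than an argument: an admissible competitor need not contain $\gamma$ at all, so any pointwise comparison of $\Sigma'$ with $\gamma$ retains an irreducible $O(r)$ slack, and refining the partition does not make that slack telescope away. The second repair is provably hopeless: if $M_0$ is a finite set, then every maximal distance minimizer for $M_0$ is a finite union of at most $2\sharp M_0-1$ line segments (as recalled in Section~\ref{sec:Sttree}), so a genuinely curved $C^{1,1}$ curve --- already a circular arc --- is never a minimizer for \emph{any} finite $M_0$, and Remark~\ref{Obs:MisBrSigma}(i) then has nothing to transfer. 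Theorem~\ref{theocornerpoints} does not help either: its datum is an infinite compact set and its minimizer is an infinite polyline with corners, not a smooth curve, so its forcing construction does not apply ``almost verbatim'' to $\gamma$.

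The paper closes the gap by comparing volumes rather than lengths, and that is exactly where the sharp constant comes from. By the folklore lemma (see~\cite{mosconi2005gamma}), every connected compact competitor $\Sigma'$ satisfies
\[
\HH^d\bigl(\overline{B_r(\Sigma')}\bigr)\;\le\;\H(\Sigma')\,\omega_{d-1}r^{d-1}+\omega_d r^d .
\]
On the other hand, a $C^{1,1}$ curve has positive reach, so for $r$ below the reach the tube has no self-overlap and its volume is \emph{exactly} $\HH^d\bigl(\overline{B_r(\gamma)}\bigr)=\H(\gamma)\,\omega_{d-1}r^{d-1}+\omega_d r^d$; in other words, the inequality of Corollary~\ref{TilliType} is attained by $\gamma$. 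Since any admissible competitor must satisfy $M=\overline{B_r(\gamma)}\subset\overline{B_r(\Sigma')}$, combining the two displays gives $\H(\Sigma')\ge\H(\gamma)$ with no error term. This is precisely the step your chord-summing cannot reproduce: the $O(r)$ ambiguity in locating $\Sigma'$ relative to $\gamma$ is invisible to the volume count, because the lemma charges every unit of covered volume to the length of $\Sigma'$ at the universal rate $\omega_{d-1}r^{d-1}$, wherever $\Sigma'$ happens to sit.
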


\section{Explicit examples for maximal distance minimizers}

Recall that Theorems~\ref{theocornerpoints} and~\ref{Th:C11isaminimizer} provide explicit examples; however they are obtained by ``reverse engineering'': the input $M$ is constructed in a way to give the minimizer property to a desired $\Sigma$.
This section is devoted to known explicit results. 

\subsection{Simple examples. Finite number of points and \texorpdfstring{$r$}{r}-neighbourhood. Inverse minimizers}
\label{sec:Sttree}

Here we consider Problem~\ref{TheDualProblem} in a case when $M$ is a finite set. Then it is closely related with the following Steiner problem.

\begin{problem}\label{Problem1}
For a given finite set $P = \{x_1,\dots ,x_n\} \subset \mathbb{R}^d$ to find a connected set $\St(P)$ with the minimal length (one-dimensional Hausdorff measure) containing $P$.
\end{problem}

A solution $\St$ to Problem~\ref{Problem1} is called a \textit{Steiner tree}. It can be represented as a plane graph, such that its set of vertices contains $P$, and all its edges
are line segments. This graph is connected and does not contain cycles, i.e. is a tree, which explains the naming of $S$. It is
known that the maximum vertex degree of this graph is 3.
Only vertices $x_i$ can have degree $1$ or $2$, all the other vertices have degree $3$ and are called \textit{Steiner points} while the vertices $x_i$ are called \textit{terminals}. 
Vertices of the degree $3$ are called \textit{branching points}. The angle between any two adjacent edges of $\St$ is at least $2\pi/3$.
That means that for a branching point the angle between any two segments incident to it is exactly $2\pi/3$, and these three segments belong to the same 2-dimensional plane. The number of Steiner points in $\St$ does not exceed $n-2$. A Steiner tree with exactly $2n-2$ vertices is called \textit{full}. Every terminal point of a full Steiner tree has degree one. 
Full trees are \textit{indecomposable} in the sense that a full tree cannot be represented as a union of Steiner trees for non-empty $P_1$ and $P_2$ such that $P_1 \cup P_2 = P$.

We define a \textit{locally minimal tree} for $P$ as a connected compact acyclic set $S$, which contains $P$, and such that for any 
$x \in S \setminus P$ there is a neighborhood $U \ni x$ such that $S \cap U$ coincides with the Steiner tree on the set of points $(S \cap \partial U) \cup (P \cap U)$. 
A locally minimal tree retains the following properties of a Steiner tree: it is the union of a finite set of segments; the angle
between any two adjacent segments is at least $2\pi/3$.

Proofs of the listed properties and more information on Steiner trees could be found in~\cite{gilbert1968steiner,hwang1992steiner}.

Any maximal distance minimizer for any finite set $M \subset \mathbb{R}^d$ is a finite union of at most $2\sharp M-3$ segments. In this case maximal distance minimization problem comes down to connecting $r$-neighborhoods of all the points from $M$. If $\overline{B_r(a)}$ are disjoint for every $a \in M$ then a maximal distance minimizer is a Steiner tree connecting some points from $\partial B_r(a)$, $a \in M$.

The following observations and statements of this paragraph are from the paper~\cite{inverse2022}.
\begin{remark}
\begin{itemize}
    \item[(i)] Let $\Sigma$ be a maximal distance minimizer for some $M$ and $r>0$. Then $\Sigma$ is a maximal distance minimizer for $\overline{B_r(\Sigma)}$ and $r$.
    \item[(ii)] Let $\Sigma$ be a minimizer for $\overline{B_r(\Sigma)}$ and $r>0$. Then $\Sigma$ is a minimizer for $\overline{B_{r'}(\Sigma)}$ and $r'$, where $0 < r' < r$.
\end{itemize}
\label{Obs:MisBrSigma}
\end{remark}

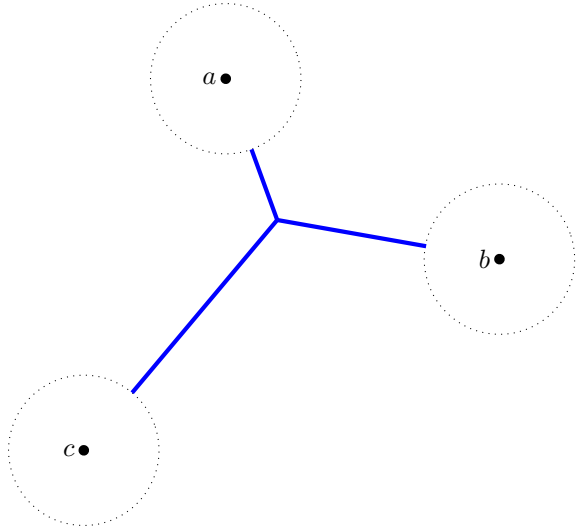
\begin{figure}[h]
	\centering
         \hfill
    \begin{tikzpicture}[rotate=20]

    \coordinate (T) at (0,0);
    \coordinate (A) at (0,2);
    \coordinate (B) at ({3*cos(-30)},{3*sin(-30)});
    \coordinate (C) at ({4*cos(-150)},{4*sin(-150)});
    
    \draw[dotted] (A) circle (1cm);
    \draw[dotted] (B) circle (1cm);
    \draw[dotted] (C) circle (1cm);
    
    \draw[ultra thick, blue] (T)--($(A)!+0.50!(T)$);
    \draw[ultra thick, blue] (T)--($(B)!+0.33!(T)$);
    \draw[ultra thick, blue] (T)--($(C)!+0.25!(T)$);
    
%    \fill (T) circle (2pt);
    \fill (A) circle (2pt);
    \fill (B) circle (2pt);
    \fill (C) circle (2pt);

%    \draw (T) node[left]{$T$};
    \draw (A) node[left]{$a$};
    \draw (B) node[left]{$b$};
    \draw (C) node[left]{$c$};
    
    \end{tikzpicture}
     \hfill
    \caption{A maximal distance minimizer for a certain $3$-point set $M = \{a,b,c\}$}
    \label{tripod}
\end{figure}

A \textit{topology} $T$ of a labelled Steiner tree (or a labelled locally minimal tree) $\St$ is the corresponding abstract graph with labelled terminals and unlabelled Steiner points.

\begin{theorem}[Basok--Cherkashin--Teplitskaya,~2022~\cite{inverse2022}] \label{theo:finiteinverse}
Let $\St$ be a Steiner tree for a labelled set of terminals $A = (a_1,\dots, a_n)$, $a_i \in \mathbb{R}^d$ such that every Steiner tree for an $n$-tuple in the closed $2r$-neighbourhood of $A$ (with respect to $\rho$) has the same topology as $\St$ for some positive $r$. Then $\St$ is an $r$-minimizer for an $n$-tuple $M$.
\end{theorem}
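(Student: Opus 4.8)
The plan is to build $M$ explicitly from the boundary geometry of $\St$ and then to recognise the length of a fixed-topology Steiner tree as a \emph{convex} function of its terminals, so that first-order optimality at the configuration $A$ upgrades to global optimality. Assume (as is generic for a Steiner minimal tree) that all $n$ terminals are the leaves $A_1,\dots,A_n$ of $\St$, and for each $i$ let $e_i$ be the unit direction of the edge of $\St$ leaving $A_i$, pointing into the tree. Define
\[
M_i \defeq A_i - r\, e_i, \qquad M \defeq \{M_1,\dots,M_n\}.
\]
Shrinking $r$ if necessary (which only strengthens the hypothesis, since the $2r'$-neighbourhood of $A$ sits inside the $2r$-neighbourhood for $r'\le r$), the ball $\overline{B_r(M_i)}$ meets $\St$ only near $A_i$; as $M_i$ lies on the backward prolongation of the incident edge, $A_i$ is the unique nearest point of $\St$ to $M_i$ and $\dist(M_i,\St)=r$. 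Hence $\St$ is feasible for $(M,r)$.

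First I would show that $\St$ is a minimizer. Let $\Sigma'$ be any connected competitor with $F_M(\Sigma')\le r$ and pick $B_i\in\Sigma'\cap\overline{B_r(M_i)}$ for each $i$ (nonempty by feasibility). Since $\Sigma'$ is connected and contains $B=(B_1,\dots,B_n)$, we have $\H(\Sigma')\ge\H(\St(B))$, where $\St(B)$ is a Steiner tree for $B$. Because $A_i\in\partial B_r(M_i)$, every such $B_i$ satisfies $|B_i-A_i|\le 2r$, so $B$ lies in the $2r$-neighbourhood of $A$ and by hypothesis $\St(B)$ has the same topology $\tau$ as $\St$. On the region $\prod_i\overline{B_r(M_i)}$ the map $L(B):=\H(\St(B))$ therefore equals the length of the Steiner tree of fixed topology $\tau$, obtained by minimising $\sum_{\text{edges}}|u-v|$ over the positions of the degree-$3$ Steiner points; being a partial infimum of a jointly convex function of all vertices, $L$ is convex in $B$.

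It remains to see that $A$ minimises $L$ over $\prod_i\overline{B_r(M_i)}$. By the first-variation formula for Steiner trees, $\nabla_{B_i}L\big|_{B=A}=(A_i-p_i)/|A_i-p_i|=-e_i$, where $p_i$ is the Steiner point adjacent to $A_i$. The outward normal to $\overline{B_r(M_i)}$ at the boundary point $A_i=M_i+re_i$ is exactly $e_i$, so $-\nabla_{B_i}L$ is a nonnegative multiple of the outward normal: the KKT conditions for a constrained minimum hold at $A$. As $L$ is convex, $A$ is a global minimiser, so $\H(\Sigma')\ge L(B)\ge L(A)=\H(\St)$, proving $\St$ is an $r$-minimizer for $M$. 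The hard part is precisely this step — promoting the local optimality at $A$ to a global bound — and it is exactly where the topological-stability hypothesis enters, since constancy of the topology over the whole $2r$-neighbourhood is what makes $L$ a single convex function rather than a piecewise-defined minimum over topologies.

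Finally I would prove uniqueness of $M$. Suppose $\St$ is an $r$-minimizer for some $n$-tuple $M'$. Minimality forces every leaf $A_i$ to be energetic (otherwise a short terminal edge could be trimmed while keeping $F_{M'}\le r$), so each $A_i$ admits a corresponding point $y_i\in M'$ with $|y_i-A_i|=r$, nearest-point $A_i$, and $B_r(y_i)\cap\St=\emptyset$; a counting argument ($n$ leaves, $n$ points, each leaf needing at least one corresponding point) makes $i\mapsto y_i$ a bijection. Perturbing the single leaf $A_i$ to $A_i+v$ (re-optimising the Steiner points, keeping topology $\tau$) changes the length by $\langle -e_i,v\rangle$ to first order and is feasible iff $\langle A_i-y_i,v\rangle\le 0$; optimality of $\St$ then requires the half-space $\{v:\langle A_i-y_i,v\rangle\le 0\}$ to be contained in $\{v:\langle e_i,v\rangle\le 0\}$, which forces $e_i$ to be a positive multiple of $A_i-y_i$ and hence $y_i=A_i-re_i=M_i$. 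Therefore $M'=M$.
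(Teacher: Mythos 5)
The construction $M_i \defeq A_i - re_i$ and your proof that $\St$ is a minimizer are correct and are in substance the paper's own mechanism: reducing an arbitrary competitor to a choice of points $B_i \in \Sigma' \cap \overline{B_r(M_i)}$ and invoking convexity of the fixed-topology length is exactly the ``Convexity argument'' the survey describes in its Tools section, with the topological-stability hypothesis entering precisely where you put it. Two comments on this half. Drop the shrinking of $r$: it is never used (feasibility of $\St$ needs only $\dist(M_i,\St)\le |M_iA_i| = r$, and the convexity/KKT step nowhere requires $B_r(M_i)\cap\St=\emptyset$), and it is not harmless, since the theorem asserts the conclusion for the $r$ of the hypothesis, while after shrinking you prove it only for some $r'\le r$. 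Your standing assumption that every terminal is a leaf, on the other hand, is the right call: uniqueness can genuinely fail without it — for three collinear terminals the stability hypothesis holds for small $r$, yet the segment is an $r$-minimizer for every $3$-tuple $\{A_1-re_1,\ y,\ A_3-re_3\}$ with $\dist(y,\St)\le r$, because any competitor already has length at least $|M_1M_3|-2r=|A_1A_3|$.

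The genuine gap is in the uniqueness part, at ``a counting argument \dots makes $i\mapsto y_i$ a bijection''. Energeticity of a leaf gives you a point $y_i\in M'$ with $|y_iA_i|=r$ and $B_r(y_i)\cap\St=\emptyset$; it does \emph{not} give that $A_i$ is the \emph{unique} nearest point of $\St$ to $y_i$. Without that, injectivity of $i\mapsto y_i$ does not follow from counting: a single $y\in M'$ could a priori correspond to two distinct leaves (a sphere of radius $r$ can touch a tree at two of its leaves), and then some points of $M'$ are left unaccounted for, sitting anywhere within distance $r$ of $\St$; your later claim ``feasible iff $\langle A_i-y_i,v\rangle\le 0$'' then breaks, because feasibility of the perturbed tree must also be checked against those leftover points, whose coverage may depend on exactly the edge and Steiner points being moved. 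The repair is a sharper trimming argument, not counting: for each $\eps>0$, deleting the tip $\{A_i+te_i:\ 0\le t<\eps\}$ yields a connected compact competitor of strictly smaller length, so some point of $M'$ lies at distance $>r$ from it; since $M'$ is finite, one point $y_i\in M'$ works for a sequence $\eps_k\downarrow 0$, hence for every $\eps>0$. This forces $|y_iA_i|=r$ while $|y_iz|>r$ for every $z\in\St\setminus\{A_i\}$, so $A_i$ is the unique nearest point of $\St$ to $y_i$; distinct leaves therefore get distinct witnesses, and $\sharp M'=n$ gives $M'=\{y_1,\dots,y_n\}$. With this in hand your first-order step is sound as written: moving $A_i$ to $A_i+v$ leaves each $y_j$, $j\ne i$, covered by the untouched terminal $A_j$, so the only active constraint is that of $y_i$, and the nonexistence of $v$ with both $\langle e_i,v\rangle>0$ and $\langle y_i-A_i,v\rangle>0$ forces $y_i-A_i=-re_i$, i.e.\ $y_i=M_i$ and $M'=M$.
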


In the plane a Steiner tree for a random input is unique with unit probability, see~\cite{basok2018uniqueness}. Also in the plane we have a general inverse statement to Theorem~\ref{theo:finiteinverse}. 

\begin{proposition}[Basok--Cherkashin--Teplitskaya,~2022~\cite{inverse2022}]
Suppose that $\St$ is a full Steiner tree for terminals $a_1,\dots, a_n \in \mathbb{R}^2$, which is not unique. 
Then $\St$ can not be a minimizer for $M$ being an $n$-tuple of points. 
\label{prop:notaminimizer}
\end{proposition}

To illustrate Proposition~\ref{prop:notaminimizer} consider a square $a_1a_2a_3a_4$. There are two Steiner trees for $a_1,a_2,a_3,a_4$ (see the left-hand side of Fig.~\ref{picnotunique}), let us pick the solid one. The right-hand side of Fig.~\ref{picnotunique} shows that an $r$-minimizer for every positive $r$ has the topology of the dotted Steiner tree.

In all known examples a $\St$ with $n$ terminals is an $r$-minimizer for a set $M$ of $n$ points and a small enough positive $r$ if and only if $\St$ in the unique Steiner tree for its terminals. So the planar case of several non-full solutions is open, and also it is interesting to derive any analogue of Proposition~\ref{prop:notaminimizer} for $d > 2$.

\begin{figure}[h]
	\centering
        \hfill  
   \begin{tikzpicture}
    \def\r{1.5cm}
    \draw[ultra thick, blue]
        (-\r, \r) coordinate(x1) node[black, above right]{$1$} --++ (-60:{\r/cos(30)}) coordinate (x5);
    \draw[ultra thick, blue]
        (\r,\r) coordinate(x2) node[black, above left]{$2$} --++ (-120:{\r/cos(30)}) coordinate (x6);
    \draw[ultra thick, blue]
        (\r, -\r) coordinate(x3) node[black, below left]{$3$} --++ (120:{\r/cos(30)});
    \draw[ultra thick, blue]
        (-\r,-\r) coordinate(x4) node[black, below right]{$4$} --++ (60:{\r/cos(30)});
    \draw[ultra thick, blue]
        (x5) -- (x6);
        
    \draw [dotted,white] (x1)  --++ (-60:{-\r/2}) coordinate (x11);
    \draw [dotted,white] (x2)  --++ (-120:{-\r/2}) coordinate (x12);
    \draw [dotted,white] (x3)  --++ (120:{-\r/2}) coordinate (x13);
    \draw [dotted,white] (x4)  --++ (60:{-\r/2}) coordinate (x14);
    
    \draw [dashed,shift={(x11)},white]  plot[domain=0:6.5,variable=\t]({cos(\t r)/1.35},{sin(\t r)/1.35});
    \draw [dashed,shift={(x12)},white]  plot[domain=0:6.5,variable=\t]({cos(\t r)/1.35},{sin(\t r)/1.35});
    \draw [dashed,shift={(x13)},white]  plot[domain=0:6.5,variable=\t]({cos(\t r)/1.35},{sin(\t r)/1.35});
    \draw [dashed,shift={(x14)},white]  plot[domain=0:6.5,variable=\t]({cos(\t r)/1.35},{sin(\t r)/1.35});

    \draw[ultra thick, blue, dashed]
        (x1)  --++ (-30:{\r/cos(30)}) coordinate (x7);
    \draw[ultra thick, blue, dashed]
        (x2)  --++ (-150:{\r/cos(30)});
    \draw[ultra thick, blue, dashed]
        (x3)  --++ (150:{\r/cos(30)}) coordinate (x8);
    \draw[ultra thick, blue, dashed]
        (x4) --++ (30:{\r/cos(30)});
    \draw[ultra thick, blue, dashed]
        (x7) -- (x8);
    \foreach \x in{1,2,...,8}{
        \fill (x\x) circle (2pt);
    }
\end{tikzpicture}
\hspace{2cm}
\begin{tikzpicture}
    \def\r{1.5cm}
    \draw[ultra thick, blue]
        (-\r, \r) coordinate(x1) node[black, below left]{$1$} --++ (-60:{\r/cos(30)}) coordinate (x5);
    \draw[ultra thick, blue]
        (\r,\r) coordinate(x2) node[black, below right]{$2$} --++ (-120:{\r/cos(30)}) coordinate (x6);
    \draw[ultra thick, blue]
        (\r, -\r) coordinate(x3) node[black, above right]{$3$} --++ (120:{\r/cos(30)});
    \draw[ultra thick, blue]
        (-\r,-\r) coordinate(x4) node[black, above left]{$4$} --++ (60:{\r/cos(30)});
    \draw[ultra thick, blue]
        (x5) -- (x6);
        
    \draw [dotted] (x1)  --++ (-60:{-\r/2}) coordinate (x11);
    \draw [dotted] (x2)  --++ (-120:{-\r/2}) coordinate (x12);
    \draw [dotted] (x3)  --++ (120:{-\r/2}) coordinate (x13);
    \draw [dotted] (x4)  --++ (60:{-\r/2}) coordinate (x14);
    
    \draw [dashed,shift={(x11)}]  plot[domain=0:6.5,variable=\t]({cos(\t r)/1.35},{sin(\t r)/1.35});
    \draw [dashed,shift={(x12)}]  plot[domain=0:6.5,variable=\t]({cos(\t r)/1.35},{sin(\t r)/1.35});
    \draw [dashed,shift={(x13)}]  plot[domain=0:6.5,variable=\t]({cos(\t r)/1.35},{sin(\t r)/1.35});
    \draw [dashed,shift={(x14)}]  plot[domain=0:6.5,variable=\t]({cos(\t r)/1.35},{sin(\t r)/1.35});

    \draw [dotted] (x11)  --++ (-30:{\r/2}) coordinate (x21);
    \draw [dotted] (x12)  --++ (-150:{\r/2}) coordinate (x22);
    \draw [dotted] (x13)  --++ (150:{\r/2}) coordinate (x23);
    \draw [dotted] (x14)  --++ (30:{\r/2}) coordinate (x24);

    \draw[ultra thick, blue, dashed]
        (x21)  --++ (-30:{\r*0.94}) coordinate (x7);
    \draw[ultra thick, blue, dashed]
        (x22)  --++ (-150:{\r*0.94});
    \draw[ultra thick, blue, dashed]
        (x23)  --++ (150:{\r*0.94}) coordinate (x8);
    \draw[ultra thick, blue, dashed]
        (x24) --++ (30:{\r*0.94});
    \draw[ultra thick, blue, dashed]
        (x7) -- (x8);
    
    \foreach \x in{1,2,...,8}{
        \fill (x\x) circle (2pt);
        }
    \foreach \x in{1,2,...,4}{
        \fill (x2\x) circle (2pt);
    }
\end{tikzpicture}
    \hfill \ 
    \caption{An example to Proposition~\ref{prop:notaminimizer}}
    \label{picnotunique}
\end{figure}
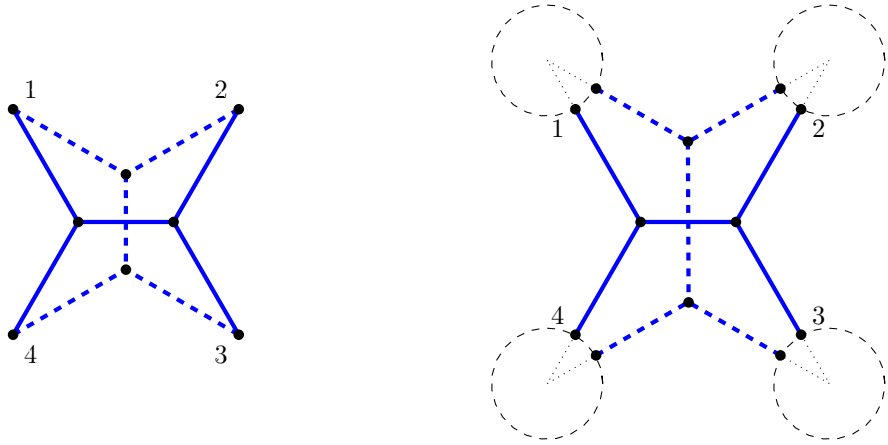

\subsection{Circle. Curves with big radius of curvature}

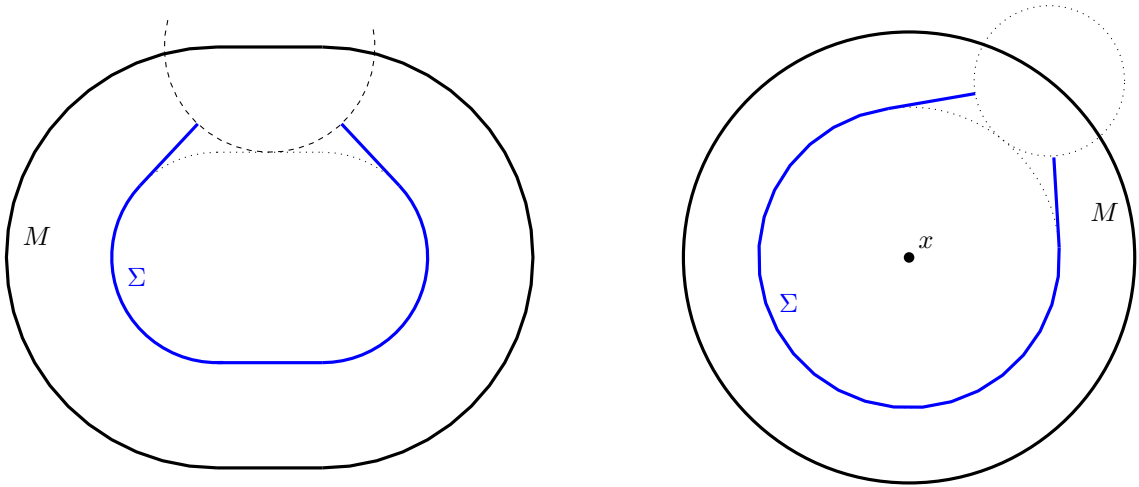
\begin{figure}[h]
	\centering
         \hfill
    \begin{tikzpicture}

    \begin{scope}[scale=1.4]

    \draw [very thick] (-3.,2.)-- (-2.,2.);
    \draw [very thick] (-2.,-2.)-- (-3.,-2.);
    \draw [very thick, color=blue] (-3.,-1.)-- (-2.,-1.);
    \draw [dotted] (-3.,1.)-- (-2.,1.);
    \draw [shift={(-3.,0.)},very thick]  plot[domain=1.5707963267948966:4.71238898038469,variable=\t]({2.*cos(\t r)},{2.*sin(\t r)});
    \draw [shift={(-3.,0.)},dotted]  plot[domain=1.5707963267948966:4.71238898038469,variable=\t]({cos(\t r)},{sin(\t r)});
    \draw [shift={(-2.,0.)},dotted]  plot[domain=-1.5707963267948966:1.5707963267948966,variable=\t]({cos(\t r)},{sin(\t r)});
    \draw [shift={(-2.,0.)},very thick]  plot[domain=-1.5707963267948966:1.5707963267948966,variable=\t]({2.*cos(\t r)},{2.*sin(\t r)});
    \draw [shift={(-2.5008100281931047,2.)},dash pattern=on 2pt off 2pt] plot[domain=-3.4:0.2,variable=\t]({cos(\t r)},{sin(\t r)});
    \draw [very thick,color=blue] (-3.7309420990521054,0.6824394829091469)-- (-3.1832495111022516,1.2690579009478955);
    \draw [very thick,color=blue] (-1.8178907146013357,1.2695061868000874)-- (-1.2695061868000874,0.6829193135917684);
    \draw [shift={(-2.,0.)},very thick,color=blue]  plot[domain=-1.5707963267948966:0.7517515639553677,variable=\t]({cos(\t r)},{sin(\t r)});
    \draw [shift={(-3.,0.)},very thick,color=blue]  plot[domain=2.390497746093128:4.771687465439039,variable=\t]({cos(\t r)},{sin(\t r)});
    \draw (-4.7089822121844977,0.2025724503290166) node {$M$};

    \draw[color=blue] (-3.763965847825129,-0.1818662773850787) node {$\Sigma$};

    \end{scope}
 
    \begin{scope}[shift = ({5,0})]
        
    \coordinate(O) at (0,0);
    \coordinate(x1) at ({2*cos(1.71 r)}, {2*sin(1.71 r)});
    \coordinate(x2) at ({2*cos(6.35 r)}, {2*sin(6.35 r)});
    \coordinate(y)  at ({3*cos(0.90 r)}, {3*sin(0.90 r)});
    
    \draw [very thick] (O) circle (3cm);
    \draw [dotted] (O) circle (2cm);
    \draw [dotted] (y) circle (1cm);
    
    \draw [very thick,color=blue] (x1)-- ($(y)!+0.46!(x1)$);
    \draw [very thick,color=blue] (x2)-- ($(y)!+0.46!(x2)$);
    \draw [shift={(0.,0.)},very thick,color=blue]  plot[domain=1.71:6.35,variable=\t]({2*cos(\t r)},{2*sin(\t r)});

%\draw [fill=qqzzqq] (2.747082869354174,2.9074964675646364) circle (2.5pt);
%\draw [fill=ffqqqq] (x1) circle (1.5pt);
%\draw [fill=ffqqqq] (x2) circle (1.5pt);

    \fill (O) circle (2pt);
    \draw (O) node [above right] {$x$};

    \draw (2.6,0.6) node {$M$};
    
    \draw[color=blue] (-1.6,-0.6) node {$\Sigma$};
    
    \end{scope}

    \end{tikzpicture}
     \hfill
    \caption{A minimizer for a convex closed planar curve $M$ with the radius of curvature at least $5r$ at every point, so-called \textit{horseshoe} (left). A minimizer for $M = \partial B_R(x)$, where $R > 4.98r$ (right)}
    \label{horseshoe}
\end{figure}

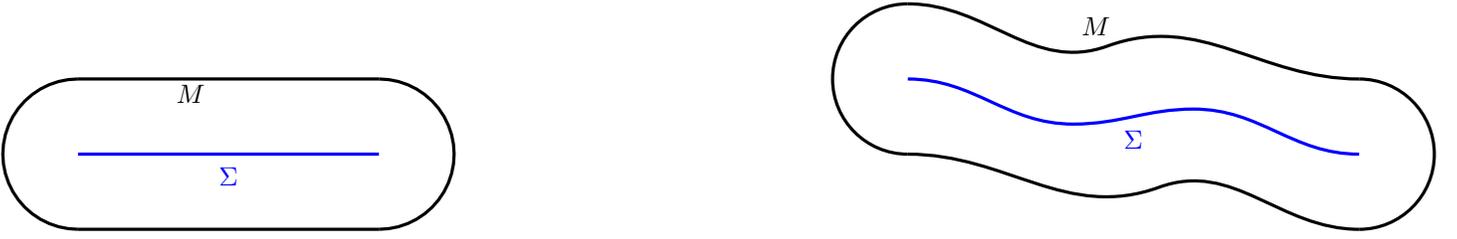
\begin{figure}[h]
	\centering
    \begin{tikzpicture}

    \draw [very thick] (-2, 1) -- (2, 1);
    \draw [very thick] (-2,-1) -- (2,-1);
    \draw [very thick, color=blue] (-2,0)-- (2,0);

    \draw [shift={(-2.,0.)},very thick]  plot[domain=1.5707963267948966:4.71238898038469,variable=\t]({cos(\t r)},{sin(\t r)});
    \draw [shift={(2.,0.)},very thick]  plot[domain=-1.5707963267948966:1.5707963267948966,variable=\t]({cos(\t r)},{sin(\t r)});

    \draw (-0.5,0.8) node {$M$};
    \draw[color=blue] (0,-0.3) node {$\Sigma$};
    \end{tikzpicture}
     \hfill
    \begin{tikzpicture}[scale=1]

        \coordinate (O) at (0, 0);
        \coordinate (A) at (-3, 0.5);
        \coordinate (B) at ( 3,-0.5);

        \coordinate (C) at (-1.9, 0.2);
        \coordinate (D) at ( 1.9,-0.2);
        
        \coordinate (M) at (-0.8,-0.1);
        \coordinate (N) at ( 0.8, 0.1);

        \coordinate (x1) at ({cos(110)}, {sin(70)});
        \coordinate (x2) at ({-cos(110)}, {-sin(70)});

        \coordinate (a1) at (-3, 1.5);
        \coordinate (a2) at (-3, -0.5);

        \coordinate (b1) at (3, 0.5);
        \coordinate (b2) at (3,-1.5);

        \def\radius{1}

        \draw[very thick, blue] (A) to [out=0, in=-180] (M) to [out=0, in=-180] (N) to[out=0, in=180] (B);
        \draw[very thick] (a1) to [out=0, in=-160] (x1) to[out=20, in=180] (b1);
        \draw[very thick] (a2) to [out=0, in=-160] (x2) to[out=20, in=180] (b2);

        \draw[very thick]  (a1) arc (90:270:1);
        \draw[very thick]  (b1) arc (90:-90:1);
        
        \draw (-0.5,1.2) node {$M$};
    \draw[color=blue] (0,-0.3) node {$\Sigma$};
        %\fill (0,0) circle (1pt) node[above left]{$x$};
        %\draw (0,.45) node [above right]{$A$};
        %\draw (0,-.45) node [below left]{$B$};

    \end{tikzpicture}
    \caption{$M$ is $r$-neighbourhood for a sufficiently smooth curve $\Sigma$ and small enough $r >0$}
    \label{fig:curves}
\end{figure}

Let $M_r$ be the inner part of the boundary of $B_r(M)$.

\begin{theorem}[Cherkashin--Teplitskaya, 2018~\cite{cherkashin2018horseshoe}]
Let $r$ be a positive real, $M$ be a convex closed curve with the radius of curvature at least $5r$ at every point, $\Sigma$ be an arbitrary minimizer for $M$. Then $\Sigma$ is a union of an arc of $M_r$ and two segments that are tangent to $M_r$ at the ends of the arc (so-called horseshoe, see Fig.~\ref{horseshoe}). 
In the case when $M$ is a circumference with the radius $R$, the condition $R > 4.98r$ is enough.
\label{horseshoeT}
\end{theorem}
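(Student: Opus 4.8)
The plan is to pass to the dual formulation (Problem~\ref{TheDualProblem}): among connected compacta $\Sigma$ with $M\subset\overline{B_r(\Sigma)}$ minimise $\H(\Sigma)$. Write $K$ for the convex body bounded by $M$ and $K_{-r}:=\{x\in K:\dist(x,M)\ge r\}$ for its inner parallel body, so that the covering condition says exactly that the $r$-neighbourhood of $\Sigma$ swallows $\partial K=M$, which forces $\Sigma$ to meet the collar $\overline K\setminus\Int K_{-r}=\{x\in\overline K:\dist(x,M)\le r\}$. The curvature hypothesis (radius of curvature $\ge 5r$, and $R>4.98r$ in the circular case) guarantees that the inner parallel curve $M_r:=\partial K_{-r}$ is a genuine convex $C^{1,1}$ curve and that the collar is a regular annular neighbourhood of $M$. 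First I would reduce to $\Sigma\subset\overline K$: nearest-point projection onto the convex set $\overline K$ is $1$-Lipschitz and fixes every point of $M$, so for $y\in M$ one has $\dist(y,\Pi(\Sigma))\le\dist(y,\Sigma)$ and $\H(\Pi(\Sigma))\le\H(\Sigma)$; a part of $\Sigma$ outside $\overline K$ would be strictly shortened while coverage is preserved, so every minimizer lies in $\overline K$. From the quoted theory $\Sigma$ is then a tree (absence of loops), Ahlfors regular, with finitely many branch points, each a regular tripod.

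The first real step is to show that $\Sigma$ has no branch points and no corner points, so that by the $C^1$-regularity corollary above it is a simple $C^1$ arc. The guiding observation is that $M_r$ is convex, hence every chord of $M_r$ lies in $K_{-r}$, strictly farther from $M$ than $M_r$; consequently no piece of $\Sigma$ that must help cover $M$ can run along such a chord without leaving an arc of $M$ at distance $>r$. A branch point is a non-energetic tripod whose three straight legs would have to reach the covering collar from inside; using the near-flatness of $M$ at scale $r$ (which is exactly what curvature $\ge 5r$ buys) one performs a local surgery replacing the tripod by a shorter connected competitor covering the same portion of $M$, contradicting minimality, and the same flatness excludes genuine corners. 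I expect this elimination of branching — making the intuition ``a single convex target forces a path, not a branched tree'' quantitative under only a curvature hypothesis — to be the main obstacle.

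Once $\Sigma$ is a simple $C^1$ arc I would classify its points. A non-energetic point is, by the quoted dichotomy and the absence of tripods, locally a straight segment; an energetic smooth point $x$ carries a corresponding $y\in M$ with $\dist(x,y)=r$ and $B_r(y)\cap\Sigma=\emptyset$, so the open ball $B_r(y)$ is tangent to the arc from inside at $x$ and $y$ lies along the inner normal at distance $r$, whence $x\in\overline K$ and convexity give $\dist(x,M)=r$, i.e.\ $x\in M_r$. Thus the energetic part of $\Sigma$ lies on the convex curve $M_r$, while a straight piece interior to the arc is impossible, since its endpoints would be energetic points of $M_r$ and the chord joining them lies in $K_{-r}$, leaving the arc of $M$ above it at distance $>r$. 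Hence the straight pieces are exactly the two end segments, leaving $M_r$ outward into the collar; by continuity of one-sided tangents (Lemma~\ref{zapred2}) each is tangent to $M_r$ at its junction and produces no corner. The energetic set is therefore a single arc of $M_r$, and $\Sigma$ is an arc of $M_r$ with two tangent end segments — precisely a horseshoe, with a single complementary gap because the arc has exactly two endpoints.

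It remains to pin down the admissible family of horseshoes and minimise $\H(\Sigma)$ within it. Writing the length as the length of the $M_r$-arc plus twice the tangent-segment length, as a function of the gap, and imposing that the two energetic tips cover the extreme points of the gap at distance exactly $r$, one is led to a one-parameter optimisation; solving it shows that the minimum is attained at the stated horseshoe and is strictly shorter than every competitor, in particular shorter than the looped inner curve $M_r$ (which is anyway excluded by absence of loops). The role of the hypothesis is purely quantitative: curvature $\ge 5r$ (respectively $R>4.98r$) is the threshold that makes the tangent-segment construction admissible — the segments stay in $\overline K$ and cover the gap, which requires $M_r$ convex, i.e.\ curvature $>r$, with room to spare — and makes the horseshoe the unique optimum. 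Extracting the sharp constant $4.98$ from this computation is the second, more computational, difficulty.
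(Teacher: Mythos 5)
Your outline has the right skeleton (pass to the dual problem, confine $\Sigma$ to the convex body, split $\Sigma$ into energetic and Steiner parts, conclude it is a horseshoe), but the step you yourself flag as ``the main obstacle'' --- excluding branch points and corner points by a ``local surgery'' justified by ``near-flatness of $M$ at scale $r$'' --- is precisely the mathematical content of the theorem, and soft flatness cannot supply it. The survey's own Theorem~\ref{theocornerpoints} exhibits a minimizer with \emph{infinitely many} corner points whose data sit in a tiny neighbourhood of a circumference of arbitrarily large radius $R$; locally that data is as flat as one likes at scale $r$, yet corners persist, because the data there is a countable point set rather than a continuum. So any exclusion argument must exploit quantitatively that $M$ is a full convex curve (every point of every arc of $M$ must stay covered under the competitor you build), and this is exactly where the hypotheses $5r$, resp.\ $4.98r$, are consumed --- not, as you suggest at the end, in making the tangent-segment construction admissible (convexity of $M_r$ only needs radius of curvature $>r$). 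The paper's actual proof in~\cite{cherkashin2018horseshoe} carries out this work with the planar machinery described in Section~4 of the survey: energetic rays, the turn bound $|\turn(\breve{[sx]})|<2\pi$ (the survey states explicitly that the turn is the key tool in the proof of Theorem~\ref{horseshoeT}), and the variational moves of the corresponding points $y$ along $M$ (Lemma~\ref{lm:energetic_points_angles}, Proposition~\ref{diffproposition}). None of this has a counterpart in your proposal, so the central step is asserted rather than proved.

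Independently, your classification step contains a false claim: ``the energetic part of $\Sigma$ lies on $M_r$.'' Already when $M$ is a circumference, the two tips of the horseshoe are energetic points (they are leaves, and a leaf is always energetic --- otherwise trimming it would shorten $\Sigma$; cf.\ the discussion of Fig.~\ref{Fig:1}), yet they lie at distance strictly less than $r$ from $M$, since the tangent segments leave the inner body bounded by $M_r$ immediately. The correct statement --- the one the decomposition $\Sigma=E_{\Sigma}\sqcup\X_{\Sigma}\sqcup\S_{\Sigma}$ is designed for --- is that the \emph{non-isolated} energetic points lie on $M_r$, while isolated energetic points (the tips, and a priori degree-2 corner-type points as in Lemma~\ref{lm:energetic_points_angles}(ii)) need not. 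Moreover, your derivation of $\dist(x,M)=r$ from tangency plus convexity is invalid even for interior points: tangency of $B_r(y)$ to $\Sigma$ at $x$ gives only $\dist(x,M)\le r$, and nothing prevents, a priori, another point $y'\in M$ with $|xy'|<r$ (then $x$ is simply not needed to cover $y'$). Upgrading this inequality to an equality for non-isolated energetic points is a genuine lemma in~\cite{cherkashin2018horseshoe}, proved from minimality, not a one-line convexity observation. Since your subsequent steps (straight pieces occur only at the two ends, tangency at the junctions, the final one-parameter optimisation) all rest on this classification and on the unproved absence of branching and corners, the proposal as it stands does not constitute a proof.
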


Also Theorem~\ref{horseshoeT} admits a corollary on local minimizers in the sense of Definition~\ref{def:local}.

\begin{corollary}[Cherkashin--Teplitskaya, 2018~\cite{cherkashin2018horseshoe}] \label{coroll:horseshoe}
    Let $\hat \Sigma$ be a local minimizer for some closed convex curve $M$ with minimal radius of curvature $R > 5r$.
    Then if $\hat \Sigma$ is not a horseshoe, one has $\H (\hat \Sigma) - \H (\Sigma) \geq (R-5r)/2$, 
    where $\Sigma$ is an arbitrary (global) minimizer. 
\end{corollary}

Miranda, Paolini and Stepanov~\cite{miranda2006one} conjectured that all the minimizers for a circumference of radius $R > r$ are horseshoes. Theorem~\ref{horseshoeT} solves this conjecture with the assumption $R > 4.98r$; for $4.98r \geq R > r$ the conjecture remains open.

\subsection{Rectangle}

\begin{theorem}[Cherkashin--Gordeev--Strukov--Teplitskaya, 2021~\cite{cherkashin2021maximal}]
Let $M = a_1a_2a_3a_4$ be a rectangle. Then there is a positive number $r_0(M)$ such that for any positive $r < r_0(M)$ every minimizer of the maximum distance functional has a topology of 21 segments, shown on the leftmost side of Fig.~\ref{rectangle}. The middle part of the figure shows an enlarged fragment of the minimizer in the vicinity of point $a_1$; the marked angles are equal to $\frac{2\pi}{3}$. The rightmost side of the figure shows an even more enlarged fragment of the minimizer in the vicinity of $a_1$.

Any minimizer of the maximum distance functional has length $\Per(M) - cr + o(r)$, where $\Per(M)$ is the perimeter of the rectangle $M$, and $c$ is a constant approximately equal to 8.473981.
\label{rectangleT}
\end{theorem}

In fact, every maximal distance minimizer is very close (in the sense of Hausdorff distance) to the one depicted in the picture.
\begin{figure}[h]
    \centering

    \begin{tikzpicture}
    
    \begin{scope}[scale=0.30]
        
        \def\w{8}
        \def\h{4.5}
        
        \def\t{.85}
        \def\c{}
        
        \foreach\x in {-1,1} {
            \foreach\y in {-1,1} {
                \draw[blue, thick]  ({\x*(-\w+1 - .292893218813455)},
                        {\y*(-\h+1 - .2928932188134548}) -- 
                       ({\x*(-\w+2 - \t)},
                        {\y*(-\h+2 - \t)});
                \draw[blue, thick]  ({\x*(-\w+2 - \t)},
                        {\y*(-\h+2 - \t)}) -- 
                       ({\x*(-\w+1)}, 
                        {\y*(-\h+2 -.5857864376269108)});
                \draw[blue, thick]  ({\x*(-\w+2 - \t)},
                        {\y*(-\h+2 - \t)}) -- 
                       ({\x*(-\w+2 - .5857864376269095)},
                        {\y*(-\h+1)});
            }
        }
        \draw[blue, thick]  (-\w+2 - .5857864376269095,-\h+1) -- 
               ( 0,-\h);
        \draw[blue, thick]  ( 0,-\h) -- 
               ( \w-2 + .5857864376269095,-\h+1);
        \draw[blue, thick]  (-\w+1, -\h+2 - .5857864376269108) -- 
               (-\w+1,  \h-2 + .5857864376269108);
        \draw[blue, thick]  (-\w+2 - .5857864376269095, \h-1) -- 
               ( \w-2 + .5857864376269095, \h-1);
        \draw[blue, thick]  ( \w-1, -\h+2 - .5857864376269108) -- 
               ( \w-1,  \h-2 + .5857864376269108);
               
        \fill[white] (1, -\h) arc (0:180:1);
        
        \draw [very thick] (-\w,-\h) rectangle (\w,\h);
        
        \draw (-\w, -\h) node[below left]{$A_1$};
        \draw (-\w, \h) node[above left]{$A_2$};
        \draw (\w, \h) node[above right]{$A_3$};
        \draw (\w, -\h) node[below right]{$A_4$};

    \end{scope}    
    
    \begin{scope}[scale=1.6,shift=({3,-1})]
    
        \coordinate (V) at (1.10837, 1.10837);
        \coordinate (Q) at (.7071, .7071);
        \coordinate (C) at (.73, .73);
        
        \draw[dashed] (0,0) -- (Q);
        \draw[thick, dotted] (0, 1) arc (90:0:1);
        \draw[thick, dotted] (.414, 0) arc (180:55:1);
        \draw[dashed] (1.414, 0) -- (1.544739754593147, 0.9914448613738104) node[pos=0.5, left]{$r$};
        \draw[thick, dotted] (0, .414) arc (-90:35:1);
        \draw[dashed] (0, 1.414) -- (0.9914448613738104,1.544739754593147) node[pos=0.5,below]{$r$};
        
        %\draw[very thick] (-5, 1) -- (-5,3) -- (-3, 3);
        \draw [very thick] (2,0) -- (0,0) -- (0,2);
        \draw (0, 0) node[below left]{$A_1$};
        \draw [very thick, blue] (0.9914448613738104,2)-- (0.9914448613738104,1.544739754593147);
        \draw [very thick, blue] (V)-- (0.9914448613738104,1.544739754593147);
        \draw [very thick, blue] (V)-- (1.5447397545931463,0.9914448613738106);
        \draw [very thick, blue] (1.5447397545931463,0.9914448613738106)-- (2,0.9914448613738102);
        \draw [very thick, blue] (Q)-- (V);    
        % \draw[very thick, blue] (-4.292893218813455, 2.2928932188134548)-- (-3.912466374682504, 1.9124663746825037);
        % \draw[very thick, blue] (-3.912466374682504, 1.9124663746825037)-- 
        % (-4, 1.5857864376269108);
        % \draw[very thick, blue] (-3.912466374682504, 1.9124663746825037)-- (-3.5857864376269095, 2);
        % \draw[very thick, blue] (-3.5857864376269095, 2)-- (-3, 2);
        % \draw[very thick, blue] (-4, 1.5857864376269108)-- (-4,1);

        \def\rr{0.12}
        \draw[shift={(V)}] 
            (0.966*\rr ,-0.2588*\rr) arc(-15:105:\rr);
        \def\rr{0.09}
        \draw[shift={(V)}] 
            (0.966*\rr ,-0.2588*\rr) arc(-15:105:\rr);
        \def\rr{0.096}
        \draw[shift={(V)}] 
            (0.966*\rr ,-0.2588*\rr) arc(-15:-135:\rr);
        \def\rr{0.072}
        \draw[shift={(V)}] 
            (0.966*\rr ,-0.2588*\rr) arc(-15:-135:\rr);
        \def\rr{0.112}
        \draw[shift={(V)}] 
            (-0.2588*\rr ,0.966*\rr) arc(105:225:\rr);
        \def\rr{0.082}
        \draw[shift={(V)}] 
            (-0.2588*\rr ,0.966*\rr) arc(105:225:\rr);
        
\node[above] at (1.644739754593147, 0.9914448613738104) {\small $\approx \frac{11\pi}{12}$};
        
        \def\ll{0.051}
        \def\llll{0.2}
        \draw[line width = .7pt] (C) circle (\ll);
        \draw[shift={(C)}, line width = 2pt] (\ll * .866, \ll * -.5) -- (\llll * 0.866, \llll * -.5);
    \end{scope}

    \begin{scope}[scale=35, shift=({-0.4,-0.732})]
        \coordinate (V) at (.7645532062, .76593);
        \coordinate (Q2) at (.723714, .725155);
        \coordinate (Q1) at (.707224, .706989);
        \coordinate (C) at (.73, .73);

        \draw[very thick, blue] (V) -- (Q2) -- (Q1);
        \fill[blue] (Q2) circle (0.0015);
        
        \draw[shift={(Q2)}, rotate=45] (0.0075, 0) arc (0 : -177 : 0.0075) node[right, pos=0.3]{$\approx 0.98\, \pi$};
        
        \fill[black, shift={(C)}, rotate=60] (-0.01, -0.048) -- (-0.013, -0.0715) -- (0.01, -0.06) -- (0.01, -0.048) -- cycle;
        \draw[black, line width=3] (C) circle (0.05);
        
        \fill[black, shift={(C)}, rotate=60] (0.0171, -0.046) arc (-70 : -110 : 0.05) -- (-0.0171, -0.049) arc (-120 : -60 : 0.0342) -- cycle;
        \end{scope}    
    
    \end{tikzpicture}

    \caption{The minimizer for a rectangle $M$ with $r < r_0(M)$.}

    \label{rectangle}
\end{figure}
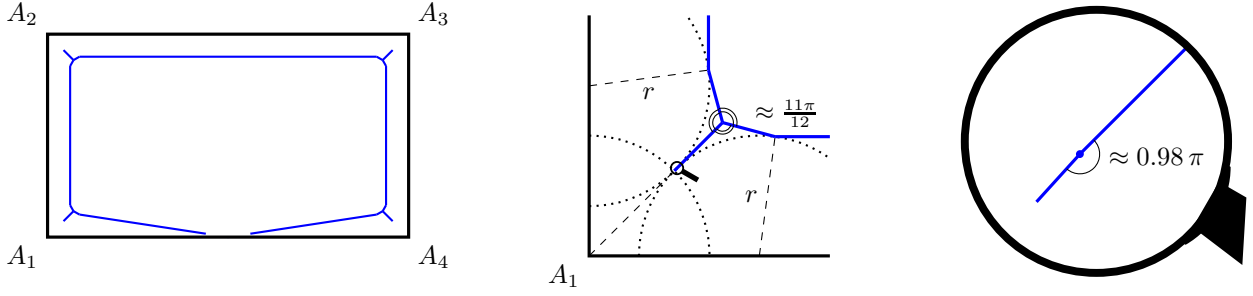    

\section{Tools}

\subsection{Energetic points}

For the planar problem the notion of energetic points (which is also defined in $\R^d$) is very useful.

Recall that a point $x \in \Sigma$ is called \emph{energetic}, if for all $\rho>0$ one has $F_{M}(\Sigma \setminus B_{\rho}(x)) > F_{M}(\Sigma)$. The set of all energetic points of $\Sigma$ is denoted by $G_\Sigma$.
Each minimizer $\Sigma$ can be split into three disjoint subsets:
        \[
        \Sigma=E_{\Sigma}\sqcup\X_{\Sigma}\sqcup\S_{\Sigma},
        \]
        where $X_{\Sigma}\subset G_\Sigma$ 
is the set of isolated energetic points 
(i.e. every $x\in X_\Sigma$ is energetic and there is a $\rho>0$ possibly depending on $x$ such that $B_{\rho}(x)\cap G_{\Sigma}=\{x\}$), $E_{\Sigma} := G_{\Sigma}\setminus X_{\Sigma}$ is the set of non-isolated energetic points and $S_\Sigma \defeq \Sigma \setminus G_\Sigma$ is the set of non-energetic points also called the \textit{Steiner part}.

Note that it is possible for a (local) minimizer in $\mathbb{R}^d$, $d>2$ to have no non-energetic points at all.
Moreover, in some sense, any (local) minimizer does not have non-energetic points in an embedding into a larger dimension:

\begin{example}\label{rem:all-energ}
Let $\Sigma$ be a (local) minimizer for a compact set $M \subset \mathbb{R}^d$ and $r > 0$.
Then $\bar \Sigma \defeq \Sigma \times \{0\} \subset \mathbb{R}^{d + 1}$ is a (local) minimizer for $\bar M = (M \times \{0\}) \cup (\Sigma \times \{r\}) \subset \mathbb{R}^{d + 1}$ and $\E_{\bar \Sigma} = \bar \Sigma$.
\end{example}

Recall that for every point $x \in G_\Sigma$ there exists a point $y \in M$ (may be not unique) such that $\dist (x, y) = r$ and $B_{r}(y)\cap \Sigma=\emptyset$.
Thus all points of $\Sigma \setminus \overline{B_r(M)}$ can not be energetic and thus $\overline{\Sigma \setminus \overline{B_r(M)}}$ is so-called Steiner forest id est each connected component of it is a Steiner tree with terminal points on the $\partial B_r(M)$.
 
In the plane it makes sense to define energetic rays.
\begin{definition}
We say that a ray $ (ax] $ is the \emph{energetic ray} of the set $ \Sigma $ with a vertex at the point $ x\in \Sigma $ if there exists a sequence of energetic points $ x_k \in G_\Sigma $ such that $ x_k \rightarrow x $ and $ \angle x_kxa \rightarrow 0 $.
\end{definition}

\begin{remark}
Let $\{x_k\} \subset G_\Sigma$ and let $x\in E_\Sigma$ be the limit point of $\{x_k\}$:  $x_k \rightarrow x$.
By basic property of energetic points for every point $x_k \in G_\Sigma$ there exists a point $y_k \in M$ (may be not unique) such that $\dist (x_k, y_k) = r$ and $B_{r}(y_k)\cap \Sigma=\emptyset$. Recall that it is called that $y_k$ corresponds to $x_k$.

Let $y$ be an arbitrary limit point of the set $\{y_k\}$. Then the set $\Sigma$ does not intersect $r$-neighbourhood of $y$: $B_r(y) \cap \Sigma =\emptyset$ and the point $y$ belongs to $M$ and corresponds to $x$.   
\end{remark}

Let $[sx] \subset \Sigma$ be a simple curve. Let us define $\turn (\breve {[sx]})$ as the upper limit (supremum) over all sequences of points of the curve:
  \[
 \turn(\breve{[sx]})=\sup_{n\in \N, s\preceq t^1 \prec \dots \prec t^n \prec x} \sum_{i=2}^{n} \widehat{t^i,t^{i-1}},
  \]
 where $ t ^ i $ denotes the ray of the one-sided tangent to the curve $ \breve {[st_i]} \subset \breve {[sx[} $ at point $t_i$, and $ {t_1, \dots, t_n}$ is the partition of the curve $ \breve{[sx[} $ in the order corresponding to the parameterization, for which $s$ is the beginning of the curve and $ x $ is the end. In this case, the angle 
 $\widehat{ (t^i, t^{i+1})} \in [-\pi, \pi [$ between two rays is counted from ray $ t^i $ to ray $ t^{i+1} $; positive direction is counterclockwise.
 
Let $\breve{sx}$ lay in the sufficiently small neighbourhood of $x$. Then if $B_r(y(x))\cap \breve{[sx]} = \emptyset$, it is true that
 \[
|\turn ([sx])|  < 2 \pi.
 \]
This property is the first one which is true for the plane and false in $\R^d$ with $d>2$, so this is the main difference between planar and non-planar cases. In the plane the turn is a very useful tool, see for example the proof of Theorem~\ref{horseshoeT}~\cite{cherkashin2018horseshoe}.

The second main differ between plane and other Euclidean spaces is also concerning angles: in the plane if you know the angles $\widehat{t^i,t^{i-1}}$ for $i =2, \ldots k$ then you know the angle $\widehat{t^1,t^k}$ which is not true for $\R^d$ with $d>2$.

\subsection{Derivation in the picture}

\label{diff}

During this subsection $M$ is a planar convex closed smooth curve with the radius of curvature greater than $r$; we follow paper~\cite{cherkashin2020minimizers}. 
Assume that the left and right neighborhoods of $y \in M$ are contained in $r$-neighborhoods of different energetic points $x_1$, $x_2 \in \Sigma$. We write conditions on the behavior of $\Sigma$ in the neighborhoods of $x_1$ and $x_2$ under the assumption by moving $y$ along $M$, which follow from the optimality of $\Sigma$.

%Let $Q(x) = \overline{B_r(x)} \cap M$ be the subset of $M$ covered by an energetic point $x \in \Sigma$; by the restrictions on $M$ the set $Q(x)$ is always an arc of~$M$. 
We start with a preliminary lemma.

\begin{lemma}\label{lm:energetic_points_angles}
\begin{itemize}
    \item [(i)] Assume that $x$ is an isolated energetic point of degree 1 (i.e. $x$ is the end of the segment $[xz] \subset \Sigma$) with a unique corresponding point $y(x) \in M$. Then $x$, $z$ and $y(x)$  lie on the same line.
    
    \item [(ii)] Assume that $x$ is an isolated energetic point of degree 2 (i.e. $x$ is the end of distinct segments $[xz_1]$ and $[xz_2] \subset \Sigma$) with a unique corresponding $y(x)$. Then $\angle z_1xy(x) = \angle y(x)xz_2$.
\end{itemize}
\end{lemma}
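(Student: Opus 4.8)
The plan is to use a first-order variational argument at the energetic point, exploiting that $\Sigma$ minimizes length among connected competitors satisfying $F_M(\cdot)\le r$. In both parts the relevant constraint is purely local. Write $P$ for the point in question ($P=Q$ in~(i), $P=W$ in~(ii)). Since $P$ is an \emph{isolated} energetic point with a \emph{unique} corresponding $y:=y(P)$, I fix a radius $\rho>0$ so small that $B_\rho(P)$ contains no other energetic point, so that under any perturbation supported in $B_\rho(P)$ the only point of $M$ whose distance to $\Sigma$ can reach the critical value $r$ is $y$; the interior points of the adjacent segments inside $B_\rho(P)$ are non-energetic and therefore carry strict slack in $F_M\le r$. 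Recall that $\dist(P,y)=r$ and $B_r(y)\cap\Sigma=\emptyset$, so every point of $\Sigma$ lies at distance $\ge r$ from $y$; hence $P$ is the unique nearest point of $\Sigma$ to $y$, which forces $y$ to project onto the \emph{endpoint/vertex} $P$ of the adjacent segment(s). Set $n:=(P-y)/r$, the outward unit vector pointing from $y$ to $P$.

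I then perturb: move $P$ to $P_t:=P+tv$ for a unit vector $v$ and small $t\ge0$, keeping the outer endpoints of the adjacent segments fixed and replacing only the portions inside $B_\rho(P)$, which keeps $\Sigma$ connected. Let $e_i$ be the outgoing unit tangent at $P$ of the $i$-th adjacent segment (one vector $e$ in~(i), two vectors $e_1,e_2$ in~(ii)); since a segment stays straight when one endpoint is moved, its outgoing direction is unchanged to first order and the first variation of length is $\frac{d}{dt}\big|_{0}\H(\Sigma_t)=-\langle\sum_i e_i,\,v\rangle$. Because $y$ keeps projecting onto the moving vertex $P_t$ for small $t$, the first variation of the binding distance is $\frac{d}{dt}\big|_{0}\dist(y,\Sigma_t)=\langle n,v\rangle$. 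Feasibility ($F_M(\Sigma_t)\le r$) thus holds to first order exactly when $\langle n,v\rangle\le0$, and minimality forces $\frac{d}{dt}\big|_0\H(\Sigma_t)\ge0$ for all such $v$. Therefore $\langle n,v\rangle\le0\Rightarrow\langle\sum_i e_i,v\rangle\le0$ for every $v$, i.e. the half-space $\{\langle n,\cdot\rangle\le0\}$ is contained in $\{\langle\sum_i e_i,\cdot\rangle\le0\}$. For two half-spaces through the origin this inclusion forces $\sum_i e_i=\lambda n$ with $\lambda\ge0$: the sum of the outgoing unit tangents at $P$ is a nonnegative multiple of the outward normal $n$.

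It remains to read off the statements. In~(i) there is a single tangent $e$, so $e=\lambda n$ with $\lambda\ge0$; as $|e|=|n|=1$ this gives $e=n$, and since $e$ is the direction $Q\to X$ and $n$ the direction $y(Q)\to Q$, the points $y(Q),Q,X$ are collinear with $Q$ between $y(Q)$ and $X$. In~(ii) we get $e_1+e_2=\lambda n$, $\lambda\ge0$. If $e_1+e_2\ne0$, decompose $e_i=a_in+b_im$ with $m\perp n$, $|m|=1$: the relation yields $b_1+b_2=0$, while $|e_1|=|e_2|=1$ yields $a_1^2=a_2^2$; the sign choice $a_1=-a_2$ would give $e_2=-e_1$, contradicting $e_1+e_2\ne0$, so $a_1=a_2$, i.e. $\langle e_1,n\rangle=\langle e_2,n\rangle$. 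Since $\cos\angle Z_1Wy(W)=-\langle e_1,n\rangle$ and $\cos\angle Z_2Wy(W)=-\langle e_2,n\rangle$, this is precisely $\angle Z_1Wy(W)=\angle y(W)WZ_2$. In the remaining straight-passage case $e_2=-e_1$, the fact that $W$ is the nearest point of the straight segment to $y$ forces $n\perp e_1$, so both angles equal $\pi/2$ and the equality still holds.

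The main obstacle, and the only place genuine care is needed, lies behind the first paragraph: making the perturbation truly admissible. One must check that for $\rho$ small the vertex $P_t$ remains the strict nearest point of the perturbed configuration to $y$ (so that the distance derivative is the naive $\langle n,v\rangle$ and not a perpendicular-foot derivative), and that no other point of $M$ becomes binding under the local move. Both are consequences of $B_r(y)\cap\Sigma=\emptyset$ together with $P$ being an isolated energetic point with a unique corresponding $y$; once they are secured, the first-order computation and the half-space inclusion are routine.
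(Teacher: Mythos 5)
Your overall strategy --- a first variation of the vertex, the half-space inclusion $\{\langle n,\cdot\rangle\le 0\}\subset\{\langle\sum_i e_i,\cdot\rangle\le 0\}$, and the linear algebra giving $e=n$ in (i) and $\langle e_1,n\rangle=\langle e_2,n\rangle$ in (ii), including the degenerate case $e_2=-e_1$ --- is the right kind of argument: the survey states this lemma without proof, deferring to~\cite{cherkashin2020minimizers}, where it is established by exactly such a local variation/competitor construction. The genuine gap is the admissibility step that you flag in your last paragraph and then dismiss as a ``consequence'' of the hypotheses. It is not, and the justification you sketch is partly wrong and partly insufficient. The claim that interior points of the adjacent segments ``are non-energetic and therefore carry strict slack'' confuses points of $\Sigma$ with points of $M$: slack is a property of the constraints, i.e.\ of points $m\in M$, and it is in general \emph{not} bounded below on the set of $m$ whose service requires $\Sigma\cap B_\rho(P)$, because $M$ may contain points accumulating at $y$ with $\dist(m,\Sigma)\to r$. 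So after a perturbation of size $t$ you must serve infinitely many constraints whose slack is smaller than $t$, and the proof needs two missing ingredients: (A) every $m\in M$ with $\dist(m,\Sigma)=r$ all of whose nearest points lie in $\overline{B_\rho(P)}$ equals $y$; (B) for nearly-binding $m$ (which by (A) and compactness cluster only at $y$), the perturbation with $\langle n,v\rangle<0$ \emph{decreases} $\dist(m,\cdot)$ at linear rate, e.g.\ via $\dist(m,\Sigma_t)\le|m-(q_m+s_mtv)|=\dist(m,\Sigma)-s_mt\bigl(-\langle n,v\rangle+o(1)\bigr)+O(t^2)$, using that the foot $q_m\to P$. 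Moreover, ingredient (A) is where your choice of $\rho$ fails in case (ii): isolation only forbids other \emph{energetic} points in $B_\rho(P)$, but in the plane there can exist $m\in M$ with $\dist(m,\Sigma)=r$, $B_r(m)\cap\Sigma=\emptyset$, whose two nearest points are interior points of $[WZ_1]$ and $[WZ_2]$, one on each segment; each such foot is individually removable, hence non-energetic, so isolation does not exclude them. A short computation places these feet at distance $r\cot\bigl(\angle Z_1WZ_2/2\bigr)$ from $W$, so they are eliminated only by the additional requirement $\rho<r\cot\bigl(\angle Z_1WZ_2/2\bigr)$, which your proof never imposes; if such an $m$ lies in your $B_\rho(P)$ it contributes a second binding constraint and the conclusion $e_1+e_2=\lambda n$ simply does not follow. (In case (i) isolation does suffice, but through an argument you also omit: an $m$ at distance $r$ whose unique nearest point is an interior point of the segment makes that interior point energetic.)

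There is a second, more standard, flaw: ``feasibility to first order'' is not feasibility. For a boundary direction $\langle n,v\rangle=0$ the competitor is in general inadmissible for every $t>0$, since the distance from $y$ to the moved vertex is $\sqrt{r^2+t^2}>r$ and nothing else need serve $y$; so minimality cannot be invoked on the closed half-space directly, and with infinitely many constraints the KKT-style shortcut is not available. The correct route is to work only with strictly admissible directions $\langle n,v\rangle<0$, prove genuine feasibility of $\Sigma_t$ for small $t$ using (A) and (B) above, conclude $\langle\sum_i e_i,v\rangle\le 0$ for those $v$, and only then recover the closed half-space inclusion by continuity. All of this is fixable and the conclusions you draw from $\sum_i e_i=\lambda n$ are correct, but as written the heart of the proof --- the reduction of infinitely many constraints to the single constraint at $y$ --- is asserted rather than proved, and the assertion is false under the sole condition you impose on $\rho$.
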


Recall that $M_r$ denotes the inner part of the boundary of $B_r(M)$.
Let $S$ be the closure of a connected component of $\Sigma \setminus \conv M_r$. 
For a connected $\sigma \subset \Sigma$ define the part  of $M$ which is covered by $\sigma$ as $Q(x) = \overline{B_r(x)} \cap M$; by the restrictions on $M$ the set $Q(x)$ is always an arc of~$M$.
We denote one of the ends of the arc $Q(S) := \overline{B_r(S)} \cap M$ by $y_1$. 
Let $x \in \left(\partial B_r(y_1) \setminus \conv M_r \right) \cap S$ be the energetic point for which $y_1$ is corresponding.
Clearly $S$ is a locally minimal tree for its energetic points and points from $S\cap M_r$.
Also it is clear that $x$ cannot have more than two corresponding points. If there are two corresponding points, we denote the second by $y_2$. 
Let us denote the degree of $x$ by $d\in \{1,2\}$, and the number of points corresponding to $x$ by $k \in \{1,2\}$. Thus, there are 4 possible cases, each of which we will consider in detail below.

Let us fix a $l>0$ such that $\overline{B_l(x)} \cap \Sigma$ is the union of $d$ segments of the form $[z_ix]$, $z_i \in \partial B_l(x)$, $1 \leq i \leq d$.
For a sufficiently small $0 \leq \varepsilon < \varepsilon_0(l, r, \{y_j\}_{j=1}^k, \{z_i\}_{i=1}^d, x)$ denote by $y_1 ^\varepsilon$ the point obtained by shifting the point $y_1$ along $M$ by $\varepsilon$ (that is, such that the arc $M$ with ends at $y_1$ and $y_1^\varepsilon$ has length $\varepsilon$) in such a direction that $y_1^\varepsilon \not\in Q(S)$. For a sufficiently small in modulus $0 > \varepsilon > -\varepsilon_0(l, r, \{y_j\}_{j=1}^k \{z_i\}_{i=1}^d, x)$ we denote by $y_1^\varepsilon$ the point obtained by shifting $y_1$ along $M$ by $-\varepsilon$ in the opposite direction (that is, in such a way that $y_1^\varepsilon \in Q(S)$). Let us denote $y_1^0=y_1$. In the case of $k=2$, we denote $y_2^\varepsilon = y_2$ for any $\varepsilon$.
Put
\[
\Gamma(\varepsilon) = \min_{x'} \sum_{i=1}^d |z_ix'|,
\]
where the minimum is taken over all points $x'$ such that $|y_j^\varepsilon x'|=r$ for every $1 \leq j \leq k$. Let us denote by $x_\varepsilon$ a point at which the value $\Gamma(\varepsilon)$ is reached.

Note that $x_0 = x$, since $\Sigma$ is a minimizer. The derivative $\Gamma(\varepsilon)$ at the origin $\Gamma'(0)$ will be called \textit{the derivative of the length $\Sigma$ in the neighborhood of the point $x$ as $y_1^\varepsilon$ moves along $M$}. We calculate this derivative in each of the four cases.

The following proposition describes the possible situation to apply some calculus of variation. 

\begin{proposition} 
Let $y \in M$ be a point such that $B_r(y) \cap \Sigma = \emptyset$, and $\partial{B_r(y)}$ contains energetic points $x_1$ and $x_2$.
Define $n_y = \partial B_r(y) \cap M_r$. Then
\begin{itemize}
    \item [(i)] points $x_1$ and $x_2$ lie on opposite sides of the line $(yn_y)$;
    \item [(ii)] derivatives of the length of $\Sigma$ in neighborhoods of $x_1$ and $x_2$ when moving $y$ along $M$ are equal.
\end{itemize}
\label{diffproposition}
\end{proposition}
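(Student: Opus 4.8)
The plan is to read both statements off the fact that $y$ is a maximum of the function $g(y')\defeq\dist(y',\Sigma)$ on $M$, and then to turn the minimality of $\Sigma$ into a first-order balance at the two feet. First I would record the preliminaries. Since $y\in M\subset\overline{B_r(\Sigma)}$ we have $g(y)\le r$, while $B_r(y)\cap\Sigma=\emptyset$ gives $g(y)\ge r$; hence $g(y)=r=F_M(\Sigma)=\max_M g$, so $y$ is a global maximum of $g|_M$, and the feet of $y$ on $\Sigma$ are exactly the points of $\partial B_r(y)\cap\Sigma$, which include the energetic points $x_1,x_2$. I would also note that, because $M$ is convex with radius of curvature $>r$, the inner parallel curve $M_r$ is tangent to the circle $\partial B_r(y)$ at the foot of the inward normal, so the line $(yY)$ is precisely the normal to $M$ at $y$ and $\tau\defeq$ (unit tangent to $M$ at $y$) is orthogonal to $y-Y$.

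For (i) I would parametrize $M$ near $y$ by arclength $t\mapsto y(t)$ with $y(0)=y$, $y'(0)=\tau$, and use the standard one-sided derivative of a distance-to-set function at a point with foot-set $N$: the right derivative of $g(y(t))$ at $t=0$ equals $\min_{z\in N}(y-z)\cdot\tau/r$ and the left derivative equals $\max_{z\in N}(y-z)\cdot\tau/r$. Maximality of $g$ at $t=0$ forces the right derivative $\le0$ and the left derivative $\ge0$, i.e. $\min_{z\in N}(y-z)\cdot\tau\le0\le\max_{z\in N}(y-z)\cdot\tau$. Thus among the feet there is one on each side of the normal line $(yY)$; since $x_1,x_2$ are the energetic feet realizing these extremes, the quantities $(x_1-y)\cdot\tau$ and $(x_2-y)\cdot\tau$ have opposite signs, which is exactly the assertion that $x_1$ and $x_2$ lie on opposite sides of $(yY)$.

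For (ii) the plan is to compute the two length-derivatives explicitly and then invoke minimality. Sliding the constraint point to $y(s)$ forces each contact $x_i(s)$ to stay on $\partial B_r(y(s))$; keeping the far part of $\Sigma$ fixed and imposing the angle relations of Lemma~\ref{lm:energetic_points_angles} (collinearity of $Q,X,y(Q)$ at a degree-$1$ point; equal angles $\angle Z_1Wy=\angle yWZ_2$ at a degree-$2$ point), the local length $L_i(s)$ becomes a smooth function whose derivative I would compute to have the form $dL_i/ds=c_i\,\hat u_i\cdot\tau$, where $\hat u_i=(x_i-y)/r$ is the radial direction at the contact and $c_i$ a geometric factor fixed by the local degree and angle (in the leaf case $L_i=|Z-y|-r$, giving $dL_i/ds=-\hat u_i\cdot\tau$). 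The equality $dL_1/ds=dL_2/ds$ is then the Euler--Lagrange condition expressing that $y$ is the optimal point at which the covering responsibility for $M$ passes from the branch through $x_1$ to the branch through $x_2$: if the two derivatives differed, I would build an admissible connected competitor by shifting this responsibility boundary from $y$ to $y(\varepsilon)$, advancing the cheaper contact and retracting the costlier one so that $F_M\le r$ and connectivity are preserved, producing a first-order length change proportional to $(dL_1/ds-dL_2/ds)\varepsilon$; minimality of $\Sigma$ for both signs of $\varepsilon$ forces this to vanish.

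The main obstacle is the exchange step in (ii): making rigorous the coverage-preserving competitor that reroutes the covering of a whole boundary arc near $y$ through one contact rather than the other, while keeping $\Sigma$ outside every ball $B_r(y')$ and connected, and simultaneously justifying that $L_i(s)$ is differentiable with the claimed derivative (this needs the continuity of one-sided tangents from Lemma~\ref{zapred2} together with Lemma~\ref{lm:energetic_points_angles} to control how the local configuration reacts to moving $y$). By contrast the distance-derivative computation underlying (i) is routine once $y$ is identified as a maximum of $g|_M$.
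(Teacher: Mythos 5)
The survey states Proposition~\ref{diffproposition} without proof (it is quoted from~\cite{cherkashin2020minimizers}), so your proposal must stand on its own; it does not, and the problem is already in part (i). Your Danskin-type computation of the one-sided derivatives of $g=\dist(\cdot,\Sigma)$ along $M$ at its maximum point $y$ is correct, but it only gives information about the \emph{whole} foot set $N(y)=\partial B_r(y)\cap\Sigma$: namely that $N(y)$ meets both \emph{closed} half-planes bounded by $(yY)$. The decisive sentence ``since $x_1,x_2$ are the energetic feet realizing these extremes'' is asserted, never proved: your argument does not use energeticness at all, whereas energeticness is exactly the hypothesis that carries the statement. The foot set may contain non-energetic points (a Steiner-part segment of $\Sigma$ can be tangent to $\partial B_r(y)$), and nothing in your argument prevents both energetic feet from lying on the same side while a non-energetic foot supplies the other sign of $(z-y)\cdot\tau$; the straddling you obtain is also only weak. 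Moreover, the gap cannot be repaired by any argument localized at $y$ through $g$ alone: an energetic foot may owe its energeticness to a portion of $M$ far from $y$ --- in the rectangle minimizer of Theorem~\ref{rectangleT}, at a transition point $y$ on a side, the corner-side foot is energetic because of points of $M$ near the corner, not near $y$. A proof has to exploit the witness structure of energetic points: for every $\rho>0$ there is $y_\rho\in M$ with $\overline{B_r(y_\rho)}\cap\Sigma\subset B_\rho(x_i)$; since $\partial B_r(x_i)$ meets $M$ in at most two points, each set $M\cap\overline{B_r(x_i)}$ is an arc with one endpoint at $y$, and if $x_1,x_2$ lay on the same side of $(yY)$ these two arcs would be nested, which (after a short local computation using $B_r(y)\cap\Sigma=\emptyset$ and the position of the witnesses forced by the nesting) contradicts energeticness of one of the two points. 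Nothing of this kind appears in your text.

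For part (ii) your mechanism is the right one --- a first-variation exchange argument at the point where covering responsibility passes from the branch at $x_1$ to the branch at $x_2$, using minimality for both signs of the shift --- and your leaf-case derivative agrees with Case 1 listed after the proposition. But the proposal stops exactly where the proof has to be carried out, and you say so yourself: constructing the admissible competitor (checking $F_M\le r$ on \emph{all} of $M$, not only near $y$, and preserving connectedness), and proving that the local lengths $L_i(s)$ are differentiable, i.e.\ that the local configuration governed by Lemma~\ref{lm:energetic_points_angles} and Lemma~\ref{zapred2} responds in a well-defined smooth way to moving $y$ --- this is part of what is being claimed, not something one may assume. In addition, your ansatz $dL_i/ds=c_i\,\hat u_i\cdot\tau$ fits only the cases of a unique corresponding point (Cases 1--2 after the proposition); when $x_i$ has two corresponding points of $M$ (Cases 3--4) the derivative has a different form, and these cases are not excluded by the hypotheses. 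Finally, the exchange argument tacitly uses (i) --- that the two branches serve $M$ on the two different sides of $y$ --- so the gap in (i) propagates into (ii). As written, the proposal is a reasonable plan for (ii) and an incorrect argument for (i), not a proof of the proposition.
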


All the angles defined in the following cases are assumed to belong to $[0,\pi/2]$.

\paragraph{Case 1: $d = 1$, $k = 1$.}  Let the angle between $(xy_1]$ and $M$ be equal to $\alpha$. Then $\Gamma'(0) = \cos \alpha$.

\paragraph{Case 2: $d = 2$, $k = 1$.} Let the angle between $(xy_1]$ and $M$ be equal to $\alpha$ and let $\beta = \frac{1}{2} \angle z_1xz_2$.
Then $\Gamma'(0) = 2 \cos \alpha \cos \beta$.

For the next two cases we need more notation. Triangle $xy_1y_2$ is isosceles, denote $\angle xy_1y_2 = \angle xy_2y_1 =: \alpha$. Let us introduce the following coordinates: midpoint $o$ of the segment $[y_1y_2]$ is the origin of coordinates; $X$ axis is aligned with the beam
$[y_2y_1)$; the $Y$ axis is codirected with the ray $[ox)$. In particular, we have
\[
o = (0,0), \quad x = (0, r\sin\alpha), \quad y_1 = (r\cos\alpha,0), \quad y_2 = (-r\cos\alpha,0) .
\]
Let the angle between straight line $(y_1y_2)$ and $M$ at point $y_1$ be equal to $\delta$. 

\paragraph{Case 3: $d = 1$, $k = 2$.} Let us denote by $\beta$ the angle between $[z_1x]$ and the $X$ axis. Then
\[
\Gamma'(0) = \frac{\cos(\alpha + \delta)\sin(\alpha+\beta)}{\sin(2\alpha)}.
\]

\paragraph{Case 4: $d = 2$, $k = 2$.}
 As in case 3, let $\beta$ denote the angle between $[z_1x]$ and the $X$ axis; similarly, we denote by $\gamma$ the angle between $[z_2x]$ and the $X$ axis. Then
\[
\Gamma'(0) = \frac{\cos(\alpha + \delta)}{\sin(2\alpha)} (\sin(\alpha+\beta) + \sin(\alpha+\gamma)).
\]

If $M$ is piecewise smooth one can also apply such a type of derivation, in particular it is heavily used in the proof of Theorem~\ref{rectangleT}.

\subsection{Convexity argument}

Suppose that we fix some $M_0 \subset M \subset \mathbb{R}^d$ and consider a (possibly infinite) abstract tree $T$ whose vertices are encoded by points of $M_0$.
Let us pick an arbitrary point from $\overline{B_r(m)}$ for every $m \in M_0$ and connect such points by segments with respect to $T$.
Consider the length $L$ of such a representation of $T$; note that we allow the representation to contain cycles or edges of zero length.

Then $L$ is a convex function from $(\mathbb{R}^d)^{M_0}$ to $\mathbb{R}$. 
Also if $v,u \in \overline{B_r(m)}$, then $\alpha v + (1-\alpha)u$ also lies in $\overline{B_r(m)}$. 
It implies that the sets of local and global minimums of $L$ coincide and form a convex set. It usually means that $L$ is a unique local minimum.

This approach allows us to show that if one fixes a topology of a solution, then the corresponding Steiner-type problem has a unique solution. The proofs of Theorems~\ref{theocornerpoints} and~\ref{theo:finiteinverse} heavily use it.

\subsection{Lower bounds on the length of a minimizer}
The proof of the following folklore inequality can be found, for instance in~\cite{mosconi2005gamma}. 

\begin{lemma}
Let $\gamma$ be a compact connected subset of $\mathbb{R}^d$ with $\H(\gamma) < \infty$.
Then
\[
\HH^d(\{x \in \mathbb{R}^d: \dist(x,\gamma) \leq t\} ) \leq \H (\gamma) \omega_{d-1}t^{d-1} + \omega_d t^d,
\]
where $\omega_k$ denotes the volume of the unit ball in $\mathbb{R}^k$.
\end{lemma}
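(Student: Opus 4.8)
The plan is to prove the estimate first for finite connected polyhedral sets (finite unions of segments) and then recover the general case by approximation, since both the neighbourhood volume and the length behave well under Hausdorff convergence. For a finite connected $\Gamma=\bigcup_{i=1}^k e_i$ made of straight segments $e_i$ of lengths $\ell_i$, I would argue by induction on $k$, \emph{building $\Gamma$ one segment at a time while keeping it connected}. Start from a single vertex $v_0$, for which $\overline{B_t(\{v_0\})}$ has measure exactly $\omega_d t^d$. At each step attach a new segment $e=[v,w]$ at a vertex $v$ already present in the current set $\Gamma'$. The key observation is that, since $v\in\Gamma'$, the \emph{whole} ball $\overline{B_t(v)}$ already lies in $\overline{B_t(\Gamma')}$; hence the extra volume is at most
\[
\HH^d\big(\overline{B_t(e)}\setminus\overline{B_t(v)}\big)=\HH^d(\overline{B_t(e)})-\HH^d(\overline{B_t(v)})=\big(\ell\,\omega_{d-1}t^{d-1}+\omega_d t^d\big)-\omega_d t^d=\ell\,\omega_{d-1}t^{d-1},
\]
because the $t$-neighbourhood of a segment of length $\ell$ is a capsule of volume $\ell\omega_{d-1}t^{d-1}+\omega_d t^d$ (cylinder plus two half-balls) containing $\overline{B_t(v)}$. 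Summing over the $k$ segments gives $\HH^d(\overline{B_t(\Gamma)})\le \omega_d t^d+\big(\sum_i\ell_i\big)\omega_{d-1}t^{d-1}$, which is the claim for $\Gamma$.

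The reason I expect this route to succeed \emph{with the sharp constant} is that the induction automatically absorbs the savings at branch points: when several segments meet at a vertex one must not over-count the spherical caps, and here this is free because the \emph{entire} ball at the attachment vertex (not merely a half-ball) is already covered. A more naive ``sweep a ball of radius $t$ along a surjective curve'' argument would cover $\overline{B_t(\gamma)}$ but charge $\omega_{d-1}t^{d-1}$ times the \emph{total variation} of the curve, which for a tree is essentially $2\,\H(\gamma)$; the gain comes precisely from the fact that retraced portions add nothing. The same induction also handles connected sets with cycles: if the new segment joins two already-present vertices, even more of its capsule is pre-covered, so the bound only improves.

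For a general compact connected $\gamma$ with $\H(\gamma)<\infty$, I would pass to the limit along inscribed polyhedral continua $\Gamma_n\to\gamma$ in the Hausdorff distance with $\H(\Gamma_n)\to\H(\gamma)$ (such $\Gamma_n$ exist because a connected set of finite length is rectifiable, and Golab's theorem governs the lower semicontinuity of length). Writing $\varepsilon_n=\mathrm{dist}_H(\Gamma_n,\gamma)\to0$, monotonicity of neighbourhoods yields $\overline{B_t(\gamma)}\subset\overline{B_{t+\varepsilon_n}(\Gamma_n)}$, so the polyhedral bound gives $\HH^d(\overline{B_t(\gamma)})\le\H(\Gamma_n)\,\omega_{d-1}(t+\varepsilon_n)^{d-1}+\omega_d(t+\varepsilon_n)^d$, and letting $n\to\infty$ produces the desired inequality.

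The step I expect to be the real obstacle is the approximation: one needs polyhedral continua that are Hausdorff-close to $\gamma$ \emph{and} whose lengths do not overshoot $\H(\gamma)$ in the limit, whereas Golab's theorem only gives length lower semicontinuity in the convenient direction. A clean alternative is to make the sweeping argument rigorous directly on $\gamma$: parametrize $\gamma$ by a Lipschitz curve $\phi$, set $g(\tau)=\HH^d(\overline{B_t(\phi([0,\tau]))})$, prove the sharp differential bound $g'(\tau)\le\omega_{d-1}t^{d-1}|\phi'(\tau)|$ (the newly covered set lies in a capsule minus the already-covered ball at $\phi(\tau)$), and observe that $g'(\tau)=0$ whenever $\phi(\tau)$ has already been visited; integrating and using the area formula to count each point of $\gamma$ once then replaces the total variation by $\H(\gamma)$. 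Controlling the multiplicity of $\phi$ to justify this ``count once'' step is the delicate point, and it is exactly what the polyhedral induction lets one avoid.
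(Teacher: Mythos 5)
The survey does not actually prove this lemma: it explicitly defers the proof to the cited reference of Mosconi and Tilli. So your argument has to be judged on its own as a self-contained proof, and in outline it is correct, with the sharp constant. The polyhedral induction is sound; two details are worth making explicit. First, a connected finite union of segments can indeed be built one segment at a time while keeping connectivity, because the intersection graph of the segments is connected, so one can order them by a graph search. Second, the new segment $e$ may meet the already-built set $\Gamma'$ at a point $p$ that is \emph{not} an endpoint of $e$; your computation survives verbatim, since for any $p\in e$ one has $\overline{B_t(p)}\subset \overline{B_t(e)}\cap\overline{B_t(\Gamma')}$, and therefore the added volume is at most
\[
\HH^d\bigl(\overline{B_t(e)}\bigr)-\HH^d\bigl(\overline{B_t(p)}\bigr)=\H(e)\,\omega_{d-1}t^{d-1}.
\]
(Also note that for compact $\gamma$ the set $\{x:\dist(x,\gamma)\le t\}$ coincides with $\overline{B_t(\gamma)}$, so your silent replacement is harmless.)

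The genuine gap is exactly the one you flagged: the existence of polyhedral continua $\Gamma_n$ Hausdorff-converging to $\gamma$ whose lengths do not overshoot $\H(\gamma)$. As you say, Golab's theorem is useless here — it gives $\liminf\H(\Gamma_n)\ge\H(\gamma)$, the wrong direction — and ``rectifiability'' by itself is not a justification. But the needed approximation is elementary and even gives the stronger pointwise bound $\H(\Gamma_n)\le\H(\gamma)$: take a finite $(1/n)$-net $x_1,\dots,x_m$ of $\gamma$; since a compact connected set of finite length is arcwise connected, join $x_2$ to $x_1$ by a simple arc $A_1\subset\gamma$, and inductively join $x_{i+1}$ to $A_1\cup\dots\cup A_{i-1}$ by a simple arc $A_i\subset\gamma$ stopped at the \emph{first} point where it meets that union. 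Distinct arcs then meet in at most one point, so $\sum_i\H(A_i)\le\H(\gamma)$, and $\gamma_n:=\bigcup_i A_i$ is a subcontinuum of $\gamma$ within Hausdorff distance $1/n$ of $\gamma$. Now inscribe in each $A_i$ a polygonal line $P_i$ with the same endpoints, fine enough to be within Hausdorff distance $1/n$ of $A_i$, taking care to include among the vertices of $P_i$ the finitely many points at which later arcs were attached to $A_i$ — otherwise $\bigcup_i P_i$ may fail to be connected, which is the one trap in this construction. Since an inscribed polygonal line is never longer than its arc, $\Gamma_n:=\bigcup_i P_i$ is a polyhedral continuum with $\H(\Gamma_n)\le\H(\gamma)$, within Hausdorff distance $2/n$ of $\gamma$, and your final limiting inequality closes the proof. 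I would drop the alternative ``sweep a ball and count each point once'' route: controlling the multiplicity there amounts to re-deriving the structure theory of rectifiable continua, which is precisely what the polyhedral induction plus the above approximation lets you avoid.
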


The following corollary is very close to a theorem of Tilli on average distance minimizers~\cite{tilli2010some}.

\begin{corollary}
\label{TilliType}
Let $V$ and $r$ be positive numbers. Then for every set $M$ with $\HH^d(M) = V$ a maximal distance $r$-minimizer has the length at least
\[
\max \left (0, \frac{V - \omega_d r^d}{\omega_{d-1}r^{d-1}} \right).
\]
\end{corollary}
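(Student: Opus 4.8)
The plan is to derive Corollary~\ref{TilliType} as a direct consequence of the preceding Lemma, which bounds the $d$-dimensional volume of the $t$-neighborhood of a compact connected set. The key observation is that the defining condition $F_M(\Sigma) \leq r$ for a maximal distance $r$-minimizer is equivalent (as noted in the Notation subsection) to the inclusion $M \subset \overline{B_r(\Sigma)}$. Since $\overline{B_r(\Sigma)} = \{x \in \mathbb{R}^d : \dist(x,\Sigma) \leq r\}$, this inclusion immediately forces a containment of $d$-dimensional volumes.

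First I would apply monotonicity of $\HH^d$ to the inclusion $M \subset \overline{B_r(\Sigma)}$, obtaining
\[
V = \HH^d(M) \leq \HH^d\big(\{x \in \mathbb{R}^d : \dist(x,\Sigma) \leq r\}\big).
\]
Next I would invoke the Lemma with $\gamma = \Sigma$ and $t = r$, which gives
\[
\HH^d\big(\{x : \dist(x,\Sigma) \leq r\}\big) \leq \H(\Sigma)\,\omega_{d-1} r^{d-1} + \omega_d r^d.
\]
Chaining these two inequalities yields $V \leq \H(\Sigma)\,\omega_{d-1} r^{d-1} + \omega_d r^d$. Solving this linear inequality for $\H(\Sigma)$ gives $\H(\Sigma) \geq (V - \omega_d r^d)/(\omega_{d-1} r^{d-1})$, and since a length is always nonnegative, I would take the maximum of this quantity with $0$ to arrive at the stated bound.

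There is essentially no obstacle here: the corollary is a one-line algebraic rearrangement of the Lemma once the geometric translation $F_M(\Sigma)\leq r \Longleftrightarrow M \subset \overline{B_r(\Sigma)}$ is made, and $\Sigma$ is automatically a compact connected set of finite length by the definition of a minimizer, so the hypotheses of the Lemma are satisfied. The only point worth a moment's care is that the Lemma requires $\H(\Sigma) < \infty$; this holds because any $r$-minimizer has finite length (indeed the length constraint is built into the admissible class, and existence of minimizers guarantees $\Sigma$ lies in this class). The $\max$ with $0$ is needed precisely to handle the degenerate regime where $V \leq \omega_d r^d$, i.e.\ when the set $M$ is small enough that even a single point (or very short $\Sigma$) can be $r$-covered, in which case the raw bound would be vacuously negative.
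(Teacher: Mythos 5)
Your proof is correct and is exactly the intended derivation: the paper presents this as an immediate corollary of the preceding lemma, obtained by combining $M \subset \overline{B_r(\Sigma)}$ (equivalent to $F_M(\Sigma)\leq r$) with the volume bound for the $r$-neighborhood of $\Sigma$ and rearranging. Nothing further is needed.
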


Theorem~\ref{Th:C11isaminimizer} follows from the fact that for a $C^{1,1}$-curve and small enough $r$ the inequality in Corollary~\ref{TilliType} is sharp.
Let us provide a lower bound from~\cite{inverse2022} on the length of a minimizer in the planar case.

\begin{proposition}
Let $M$ be a planar convex set and $\Sigma$ is an $r$-minimizer for $M$. Then 
\[
\H(\Sigma) \geq \frac{\H(\partial M) - 2\pi r}{2}.
\]
\label{Cor:perimeter}
\end{proposition}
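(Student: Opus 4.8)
The plan is to prove the equivalent upper bound $\H(\partial M) \le 2\H(\Sigma) + 2\pi r$ on the perimeter of $M$, after which the claim follows by rearranging. Since $\Sigma$ is an $r$-minimizer we have $M \subseteq \overline{B_r(\Sigma)}$, and because $\Sigma$ is connected I would pass to its convex hull: $M \subseteq \overline{B_r(\Sigma)} \subseteq \overline{B_r(\conv \Sigma)}$. Now both $M$ and $\overline{B_r(\conv\Sigma)}$ are convex planar bodies, so monotonicity of the perimeter under inclusion of convex sets gives $\H(\partial M) \le \mathrm{per}(\overline{B_r(\conv\Sigma)})$.

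Next I would apply the planar Steiner formula: for a convex body $K$ one has $\mathrm{per}(\overline{B_r(K)}) = \mathrm{per}(K) + 2\pi r$ (the outer parallel body lengthens the boundary exactly by the circumference $2\pi r$ of the rounded corners). Taking $K = \conv\Sigma$ this yields $\H(\partial M) \le \mathrm{per}(\conv\Sigma) + 2\pi r$. It therefore remains to bound $\mathrm{per}(\conv\Sigma)$ by $2\H(\Sigma)$.

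The crux, and the step I expect to be the main obstacle, is precisely the inequality $\mathrm{per}(\conv\Sigma) \le 2\H(\Sigma)$ for the connected set $\Sigma$ of finite length. I would prove it by integral geometry. Writing $w(\theta)$ for the width of $\Sigma$ in direction $\theta$, Cauchy's formula gives $\mathrm{per}(\conv\Sigma) = \int_0^\pi w(\theta)\, d\theta$. Connectedness of $\Sigma$ makes each orthogonal projection $\pi_\theta(\Sigma)$ an interval, so $w(\theta) = \H(\pi_\theta(\Sigma))$; since $\pi_\theta$ is $1$-Lipschitz, this length is at most the integral over $s$ of the number of points of $\Sigma$ lying in the fibre $\pi_\theta^{-1}(s)$. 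Integrating in $\theta$ and invoking the Cauchy--Crofton formula $\int_0^\pi \int_\R \#(\pi_\theta^{-1}(s) \cap \Sigma)\, ds\, d\theta = 2\H(\Sigma)$ gives $\int_0^\pi w(\theta)\, d\theta \le 2\H(\Sigma)$. Combining the three steps produces $\H(\partial M) \le 2\H(\Sigma) + 2\pi r$, which rearranges to the stated lower bound. The delicate points are the justification of $w(\theta) = \H(\pi_\theta(\Sigma))$ (valid here because minimizers are connected) and the applicability of Cauchy--Crofton to the rectifiable set $\Sigma$, whose finiteness of length is guaranteed by the constraint $\H(\Sigma) \le l$.
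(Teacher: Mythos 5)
Your proof is correct, and in fact there is nothing in the paper to compare it against: the survey states Proposition~\ref{Cor:perimeter} without proof, so your argument stands as a self-contained justification. Every step checks out. Admissibility alone gives $M \subseteq \overline{B_r(\Sigma)} \subseteq \overline{B_r(\conv \Sigma)}$ (minimality is never used, which is natural since the bound must hold for every competitor); perimeter monotonicity applies because both $M$ and $\overline{B_r(\conv\Sigma)}$ are convex; the planar Steiner formula gives exactly $\mathrm{per}\bigl(\overline{B_r(\conv\Sigma)}\bigr)=\mathrm{per}(\conv\Sigma)+2\pi r$; and your Cauchy--Crofton normalization $\int_0^\pi\int_{\R}\#\bigl(\pi_\theta^{-1}(s)\cap\Sigma\bigr)\,ds\,d\theta = 2\H(\Sigma)$ is the right one (the unit segment gives $\int_0^\pi|\cos\theta|\,d\theta=2$). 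The two delicate points you flag resolve as you claim: the projection of a compact connected set is a closed interval, so $w(\theta)=\H(\pi_\theta(\Sigma))$; and a compact connected set with $\H(\Sigma)<\infty$ is rectifiable, so Cauchy--Crofton holds as an equality --- though note you only need the inequality $\int_0^\pi\int_\R \#\,ds\,d\theta \le 2\H(\Sigma)$, which holds for any Borel set of finite length since integral-geometric measure never exceeds Hausdorff measure. Two small remarks. First, if $M$ is degenerate (a segment or point), then $\H(\partial M)=\H(M)$ is at most the convex-geometric perimeter of $M$, so the chain survives this edge case. Second, the label \emph{Cor:perimeter} hints the authors viewed this as a corollary of a more direct route: perimeter monotonicity $\H(\partial M)\le\H\bigl(\partial\overline{B_r(\Sigma)}\bigr)$ (valid since the inner set is convex) plus the known bound that the boundary of the $r$-neighborhood of a connected set of length $\ell$ has length at most $2\ell+2\pi r$. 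Your convex-hull detour replaces that last, somewhat technical neighborhood-boundary estimate with the cleaner classical pair $\mathrm{per}(\conv\Sigma)\le 2\H(\Sigma)$ and the Steiner formula, at the price of invoking integral geometry; both routes are legitimate, and yours has the advantage that each ingredient is a textbook statement about convex bodies or rectifiable sets.
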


\subsection{Results on Steiner trees}

Recall that the Steiner tree problem is defined in Subsection~\ref{sec:Sttree}.

We call a topology $T$ \textit{realizable} for a set $P \subset \mathbb{R}^d$ if there exists such a locally minimal tree $S(P)$ with topology $T$; we will denote this tree by $S_T(P)$; by the following proposition the notation is correct.

\begin{proposition}[Melzak,~\cite{melzak1961problem}]
If a topology $T$ is realizable for $P$ then the realization $S_T(P)$ is unique.
\label{melzakuniq}
\end{proposition}

It is well-known that the length of a Steiner tree depends only on the directions at the terminals (see Maxwell formula in~\cite{gilbert1968steiner}).
So the following theorem is a crucial part of the proof of Proposition~\ref{prop:notaminimizer}.

\begin{theorem}[Oblakov~\cite{oblakov2009non}]
There are no two distinct topologies $T_1$ and $T_2$ and a planar configuration $P$ such that locally minimal trees $S_{T_1}(P)$ and $S_{T_2}(P)$ are codirected at terminals.
\label{oblakov}
\end{theorem}

\section{New results}

\subsection{\texorpdfstring{$\Gamma$}{Gamma}-convergence}
$\Gamma$-convergence is an important tool in studying minimizers based on approximation of energy. For Euclidean space the following definition of $\Gamma$-convergence can be used.
Let $X$ be a first-countable space (every Euclidean space is first-countable) and $F_{n}$: $X\to \overline {\mathbb {R} }$ be a sequence of functionals on $X$. Then $F_{n}$ are said to \textit{$\Gamma$-converge} to a $\Gamma$-limit $F$ : $X\to \overline {\mathbb {R} }$ if the following two conditions hold.

\begin{itemize}
    \item Lower bound inequality. For every sequence $x_{n}\in X$ such that $x_n\to x$ as $ n\to +\infty $,
    \[  
    F(x)\leq \liminf _{{n\to \infty }}F_{n}(x_{n}).
    \]
    \item Upper bound inequality. For every $x\in X$, there is a sequence $x_{n}$ converging to $x$ such that
    \[
    F(x)\geq \limsup _{{n\to \infty }}F_{n}(x_{n}).
    \]
\end{itemize}

In the case of maximal distance minimizers for a given compact set $M$ and a number $l>0$ we can consider the space $X$ of connected compact sets with one-dimensional Hausdorff measure at most $l$, equipped with the Hausdorff distance (the distance $d_H$ between $A, C \in X$ is the smallest $\rho$ such that $A \subset \overline{B_\rho(C)}$ and $C \subset \overline{B_\rho(A)}$).

\begin{proposition} \label{pr:convergence}
If a sequence of compacts $M_i$ converges to $M$ then $F_{M_i}$ $\Gamma$-converges to $F_M$.
\end{proposition}

\begin{proof}
By the definition of $F_{M}$ and the triangle inequality we have    
\begin{equation} \label{eq:convergence}
    |F_{M_i} (S_i) - F_{M} (S)| \leq  |F_{M_i} (S_i) - F_{M} (S_i)| + |F_{M} (S_i) - F_{M} (S)| \leq d_H(M_i,M) + d_H(S_i,S)
\end{equation}
for every connected $S_i$ and $S$. So by~\eqref{eq:convergence} for every sequence of $S_i$ with limit $S$ we have the first condition of $\Gamma$-convergence holds.
For the second condition consider $S_i$ being a Steiner tree for a finite $1/i$-network $N_i \subset S$.
By the definition $\HH(S_i) \leq \HH(S) \leq l$. Again, by~\eqref{eq:convergence} $F_{M_i} (S_i)$ converges to $F_M(S)$.
 
\end{proof}

\subsection{Approximation by Steiner trees}

A crucial property of $\Gamma$-convergence is that in the notation of Proposition~\ref{pr:convergence} every limit point of the sequence of minimizers of $F_{M_i}$ is a minimizer of $F_M$.
Now let $M_n$ be a finite $1/n$-network for $M$, so that every minimizer for $M_n$ is a finite Steiner tree.

Unfortunately, in the case of several minimizers for $M$ we cannot be sure that every minimizer is approximated.
On the other hand it can be approximated a posteriori. 
Let $\Sigma$ be a minimizer for $M$ and let $\mathcal{E}_k\subset \Sigma$ be a finite $1/k$-network and $\Sigma_k$ be an arbitrary solution to the Steiner problem for $\mathcal{E}_k$. By the definition we have
    \[
        \H(\Sigma_k)\leq \H(\Sigma).
    \] 
    On the other hand, for any subsequential limit (with respect to the Hausdorff distance) $\Sigma'$ of the sequence $\Sigma_k$ we have $\Sigma \subset \Sigma'$ and so
    \[ 
        \H(\Sigma)\leq \H(\Sigma')\leq \liminf_{k\to \infty} \H(\Sigma_k)
    \]
by Go{\l}{\k{a}}b's theorem. It follows that $\Sigma_k$ converges to $\Sigma$ and $\H(\Sigma_k)$ converges to $\H(\Sigma)$.

Summing up, every maximal distance minimizer is a limit of finite Steiner trees.
Similar results are also proved in~\cite{alvarado2020maximum}.
Recall that more detailed and structural relations of finite Steiner trees and maximal distance minimizers are considered in Section~\ref{sec:Sttree}.

\subsection{NP-hardness} \label{sec:NP}

It is well-known that Euclidean Steiner problem is NP-hard~\cite{garey1977complexity} even if we restrict the terminals to two lines in the plane~\cite{rubinstein1997steiner}. 
The first source of hardness is that if we fix a topology in planar Steiner tree problem, then one can write the length in the explicit form.
However the expression for $n$ terminals may have $\Omega (n)$ square roots.

To avoid it Garey, Graham and Johnson~\cite{garey1977complexity} introduce a discrete version of the Steiner problem: all terminals and branching points are forced to have integer coordinates and the length of every segment is replaced with its ceiling. 
Of course a minimizer of a new problem does not inherit any geometric properties, in particular we have no $2\pi/3$-condition at a branching point.
Such a discretization appears to be NP-complete (and so the initial one is NP-hard), namely, Garey, Graham and Johnson used a reduction of the X3C problem to this version of the Steiner problem.  
The X3C problem is to decide whether a family of 3-sets $\mathcal{F} \subset 2^{[3n]}$
has a subfamily of $n$ sets which covers $[3n]$ (as usual, $[3n] = \{1,2,\dots,3n\}$). It is well-known that X3C is NP-complete.

First we need the following reduction to the classical Steiner problem.

\begin{theorem}[Garey--Graham--Johnson~\cite{garey1977complexity}] \label{th:GGJ}
For a given $\mathcal{F} \subset 2^{[3n]}$ one can construct in a polynomial time in $n$ an input $X(\mathcal{F}) \subset \mathbb{R}^2$ whose size is also polynomial in $n$ such that
\begin{itemize}
    \item [(i)] if $\mathcal{F}$ has an $n$-set covering then a solution to the Steiner problem for $X(\mathcal{F})$ has the length at most $L$;
    \item [(ii)] if $\mathcal{F}$ does not have an $n$-set covering then a solution to the Steiner problem for $X(\mathcal{F})$ has the length at least $L + 12|X(\mathcal{F})|$.
\end{itemize}
Moreover $L = L(\mathcal{F})$ can be extracted from the construction of $X(\mathcal{F})$ in an explicit form.
\end{theorem}

Now let us repeat Garey--Graham--Johnson rounding in the case of maximal distance minimizers. The following problem is a discrete approximation of Problem~\ref{TheDualProblem} analogous to the discrete version of Steiner problem used in~\cite{garey1977complexity}.
Following~\cite{garey1977complexity} we replace the length function with its ceiling because it is not known if the problem of determining whether 
$\sum \sqrt{n_i} < L$ is NP or not ($n_i$, $L$ are given integers).

\begin{problem} \label{DMDM}
    Let $M$ be a finite set of points in the plane with integer coordinates and $r, \ell \in \mathbb{N}$.
    Decide whether exists a connected graph whose vertices have integer coordinates and edges are segments with the sum of the ceiling function of the length over edges at most $\ell$ such that every point of $M$ lies at a distance at most $r$ from some vertex of the graph.
\end{problem}

Now we are ready to obtain the following corollary of Garey--Graham--Johnson results and the approximation.

\begin{proposition}
Problem~\ref{DMDM} is NP-complete.     
\end{proposition}

\begin{proof}
Let $\mathcal{F} \subset 2^{[3n]}$ be an arbitrary family.
Consider the set $X(\mathcal{F})$ from Theorem~\ref{th:GGJ}. 
Fix any $r\in \mathbb{N}$ and let $k > 10r|X(\mathcal{F})|$ be a large integer number. Define $kX(\mathcal{F})$ as a set homothetic to $X(\mathcal{F})$ with the scale factor $k$. Let $M$ be the set of points closest to $kX(\mathcal{F})$ in the integer grid $\mathbb{Z}^2$.
Put also $\ell = kL(\mathcal{F}) + k$.

Then if $F$ has an $n$-set covering, then a solution $\St$ of the Steiner problem for $kX(\mathcal{F})$ has the length at most $kL(\mathcal{F})$. 
Now we replace in $\St$ every vertex with the closest point from $\mathbb{Z}^2$; denote the resulting set by $\St^D$.
By the definition $\St^D$ is a graph whose vertices have integer coordinates and it connects $M$; also it has at most $2|X(\mathcal{F})| - 3$ segments.
After the rounding the length of every segment of $\St$ grows by at most $\sqrt{2}$. 
Hence the ceiling function of the length of an edge in $\St^D$ is at most the length of the corresponding edge of $\St$ plus 3.
Thus sum of the ceiling function of the length over edges $\St^D$
is at most 
\[
kL + 6|X(\mathcal{F})| < \ell
\]
and the answer to Problem~\ref{DMDM} is positive.

On the other hand let us show that in the case when $F$ has no $n$-set covering, 
the answer to Problem~\ref{DMDM} is negative.
Consider a solution $\Sigma^D$ of Problem~\ref{DMDM} for $X(\mathcal{F})$. Assume the contrary, so that the sum of the ceiling function of length over the edges of $\Sigma^D$ is at most $\ell$. It implies $\H(\Sigma^D) \leq \ell$. 
Consider the homothety $\Sigma^D_{1/k}$ of $\Sigma^D$ with the scale factor $1/k$, one has
\[
\H(\Sigma^D_{1/k}) = \frac{\H(\Sigma^D)}{k} \leq \frac{\ell}{k}.
\]
By definition for every $x \in M$ there is a point $\sigma \in \Sigma^D$ at a distance at most $r$ from $x$. 
Hence for every $x \in X(\mathcal{F})$ there is a point $\sigma \in \Sigma^D_{1/k}$ at a distance at most $(r+1)/k < 1$ from $x$. 
Thus the length of a Steiner tree for $X(\mathcal{F})$ is at most
\[
\frac{\H(\Sigma^D)}{k} + |X(\mathcal{F})| \cdot \frac{r+1}{k} \leq \frac{\ell + (r+1)\cdot |X(\mathcal{F})|}{k} \leq L + |X(\mathcal{F})|.
\]
We got a contradiction with Theorem~\ref{th:GGJ} and thus finished the reduction of the X3C problem to Problem~\ref{DMDM} with the input $M, r, \ell$.

Finally one can easily compute the sum of the ceiling function of length over edges of a competitor for Problem~\ref{DMDM} in polynomial time.
    
\end{proof}

\subsection{Penalized form}
Let $M$ be a given compact set. Let us consider a problem of minimization $F_M(S)+\lambda \H(S)$ for some $\lambda >0$, where $F_M(S)=\max_{y \in M} \dist (y, S)$ among all connected compact sets $S$. We will call this problem \textit{$\lambda$-penalized}.

Clearly every set $T$ which minimizes $\lambda$-penalized problem for some $\lambda$ is a maximal distance minimizer for a given input $M$ and the restriction of energy $r\defeq F_M(T)$. Hence the solutions to this problem inherit all regularity properties of maximal distance minimizers.

As usual in variational calculus on a restricted class, it may happen for a small variation $\Phi_\varepsilon(\Sigma)$ of $\Sigma$, that the length constraint $\H(\Phi_\varepsilon(\Sigma))\leq l$ is violated. Hence to compute Euler--Lagrange equation associated to the maximal distance minimization problem a possible way is to consider first the penalized functional $F_M(S)+\lambda \H(S)$ for some constant $\lambda$, for which any competitor $\Sigma$ is admissible without length constraint.

Hence it is also make sense to consider \textit{local penalized problem}: the problem of searching a connected compact set $S$ of a finite length, such that  $\H(S)+\lambda F_M(S) \leq \H(T)+\lambda F_M(T)$ for every connected compact $T$ with $\diam (S \triangle T)< \varepsilon$ for sufficiently small $\varepsilon>0$. The solutions to these problems also inherit properties of local maximal distance minimizers. 

\begin{proposition}
    Consider 
\[
\min_{\Sigma \text{ compact and connected}} F_M(\Sigma)+\lambda(\H(\Sigma)-l)^{+}
\]
for any constant $\lambda>1$. Then this problem is equivalent to the maximal distance minimization problem.
\end{proposition}
\begin{proof}
    The same as for average distance minimizers (see Proposition~23 in~\cite{lemenant2011presentationENG}). % in proposition 23 p 137 %https://www.mathnet.ru/links/5881a74ac490fae4fcd66dfa811ae8cf/znsl4548.pdf
    The only difference is that we use an obvious inequality $F_M(S \setminus T_\varepsilon) - F_M(S) \leq \varepsilon$ for connected sets $S$ and $S \setminus T_\varepsilon$ such that $\H(T_\varepsilon) = \varepsilon$.
\end{proof}

\subsection{Uniqueness}

Let us start with the following simple observation.
The set of minimizers for $M$ being a circle $\partial B_R(o)$ is uncountable for $r < R$.
Indeed, any minimizer has no loops and does not reduce to a point, so its rotations rarely coincide.

Note that for every compact $M \subset \mathbb{R}^d$ and $r$ equal to the radius of the smallest ball containing $M$,
there is a unique point $o$ such that $M \subset \overline{B_r(o)}$, id est the solution to Problem~\ref{TheDualProblem} is unique.
For a larger $r$ one has an uncountable number of solutions.
This motivate us to consider only small enough $r$.
Let us call a finite point configuration $M$ \textit{ambiguous} if Problem~\ref{TheDualProblem} has several
solutions for $M$ and $r < r_0(M)$.
The following statement is a straightforward corollary from the main theorem of~\cite{basok2018uniqueness}.

\begin{proposition} \label{pr:uniq}
For $n \geq 4$ the set of planar $n$-point ambiguous configurations $M$ has Hausdorff dimension $2n-1$ (as a subset of $\mathbb{R}^{2n}$).
\end{proposition}

\begin{proof}
Fix $n \geq 4$. The main result of~\cite{basok2018uniqueness} states that the Hausdorff dimension of planar $n$-point configurations with multiple Steiner trees is $2n-1$. 

Recall that a topology $T$ of a labelled Steiner tree is the corresponding abstract graph; a topology is \textit{full} if every terminal has degree 1. We call a topology \textit{generic} if it has no terminals of degree 3.
For a not full generic topology $R$ one can replace vertex $A$ of degree 2 with a Steiner point $b$ and add edge $bA$; the resulting topology $T(R)$ is full.

By Proposition~\ref{melzakuniq} the set of all configurations for which there is a realization of any degenerate topology has Hausdorff dimension $2n-2$.

Let us show that if a Steiner tree for a finite $M$ is unique then $M$ is not ambiguous.
Consider any $n$-point planar configuration $M$ with a unique Steiner tree $\St$ whose topology is generic.
Let the length of the second locally minimal tree be $\H(\St) + a$ and choose $r < a/(2n)$.

Then a maximal distance minimizer for a given $M$ and $r$ is obtained by a convexity argument for a topology $T$.
Thus the Hausdorff dimension of planar $n$-point ambiguous configuration is at most $2n-1$.

To show that the Hausdorff dimension $2n-1$ of the set of planar $n$-point ambiguous configurations is at least $M$
we word-by-word repeat the argument of Lemma 8 in~\cite{basok2018uniqueness}.

\end{proof}

Note that we need $n \geq 4$ in the proof since there is only one full topology for each $n \leq 3$.

\section{Open questions}

\subsection{Regularity}

The first question, especially if the answer is negative, might be difficult. 

\begin{question}
Does there exist a nonplanar maximal distance minimizer with an infinite number of branching points?
\end{question}

The next question concerns the Ahlfors regularity in the following strong form for $d > 2$.

\begin{question}
Are there exist constants $C_1,C_2$ depending only on $d$ such that for every $M$ and $r > 0$ there is a $\varepsilon_0 > 0$ for which
\[
C_1\varepsilon \leq \H(\Sigma \cap B_\varepsilon(x))\leq C_2\varepsilon
\]
holds for every $x \in \Sigma$ and $\varepsilon < \varepsilon_0$?
\end{question}

An easier question should be to construct an example of a minimizer with a branching point, whose neighbourhood does not coincide with a regular tripod:

\begin{question}
To construct a (nonplanar) maximal distance minimizer $\Sigma$ containing a locally nonplanar branching point $x$,  
i.e. for every $\varepsilon > 0$ the set $B_\varepsilon(x) \cap \Sigma$ does not belong to a plane.
\end{question}

Thus the question if there exists a nonplanar maximal distance minimizer with an infinite number of points with three tangent rays also makes sense. 

The following question asks if one-sided tangents should have continuity from the corresponding side.
 
\begin{question}
Does Lemma~\ref{zapred2} hold for a $d$-dimensional maximal distance minimizer?
\end{question}

All the questions in this subsection can be also asked for local minimizers.

\subsection{Explicit solutions}

Recall that the horseshoe conjecture is still open.

\begin{question}
Find maximal distance minimizers for a circumference of radius $4.98r>R > r$.
\end{question}

At the same time, the statement of Theorem~\ref{horseshoeT} does not hold for a general $M$ if the assumption on the minimal radius of curvature is omitted as we show below.
 
Define a \textit{stadium} to be the boundary of the $R$-neighborhood of a segment. By the definition, a stadium has the minimal radius of curvature $R$. 
Let us show that if $R < 1.75r$ and a stadium is long enough, then there is the connected set $\Sigma'$ that covers $M$ and has the length smaller than an arbitrary horseshoe. 
 
\begin{figure}[h]
    \centering

\begin{tikzpicture}%[scale=1.3]

\foreach \y in {0, 3.51389977019889} {
    \draw[very thick, loosely dotted] 
        (-1, \y) -- (0, \y);
    \draw[very thick]
        (0, \y) -- (3.66, \y);
    \draw[very thick, loosely dotted] 
        (3.66, \y) -- (4.66, \y);
}
\draw[very thick] (-1, 0) arc (270:90:1.757);
\draw[very thick] (4.66, 0) arc (-90:90:1.757);

%\draw[thin, dashed] (0.2614137092297641 + 1, 0.0) arc (0:180:1);
%\draw[thin, dashed] (0.7842411276892922 + 1, 0.0) arc (0:180:1);
%\draw[thin, dashed] (1.3070685461488205 + 1, 0.0) arc (0:180:1);
\draw[thin, dashed] (1.8298959646083486 + 1, 0.0) arc (0:180:1);
%\draw[thin, dashed] (2.3527233830678767 + 1, 0.0) arc (0:180:1);
%\draw[thin, dashed] (2.875550801527405 + 1, 0.0) arc (0:180:1);
%\draw[thin, dashed] (3.398378219986933 + 1, 0.0) arc (0:180:1);
%\draw[thin, dashed] (1.8298959646083486 + 1, 3.75) arc (0:-180:1);

\draw[blue, thick] 
(0.0, 0.9652268503449002) -- 
(1.5684822553785844, 0.9652268503449002) --
(1.8298959646083486, 1.11615412573856) --
(2.0913096738381127, 0.9652268503449002) -- 
(3.659791929216697, 0.9652268503449002);

\node[blue] at (1.61,1.8) {$\Sigma_0$};

\draw[dashed] (0.0, 0.0) -- (0.0, 3.51389977019889);
\draw[dashed] (3.659791929216697, 0.0) -- (3.659791929216697, 3.51389977019889);

\fill[blue] (0.0, 0.9652268503449002) 
    circle (1.5pt) node[black, below right] {$a$};
\fill[blue] (3.659791929216697, 0.9652268503449002) 
    circle (1.5pt) node[black, below left] {$b$};

\draw[blue, thick] 
(1.8298959646083486, 1.11615412573856) -- 
(1.8298959646083486, 2.58208251632886);

\draw[thin, dashed] (1, 3.51389977019889) arc (0:-90:1);
\draw[thin, dashed] (2.66, 3.51389977019889) arc (180:270:1);

\draw[blue, thick]
(1.8298959646083486, 2.58208251632886) -- 
(0.9149479823041743, 3.11032798020668);

\draw[blue, thick] 
(1.8298959646083486, 2.58208251632886) -- 
(2.744843946912523, 3.11032798020668);

\end{tikzpicture}
%\caption{Un fer à cheval n'est pas un minimiseur pour un stade assez long si $R < 1.75r$. }
   \caption{Horseshoe is not a minimizer for long enough stadium with $R < 1.75r$.}
    \label{stadion}
\end{figure}
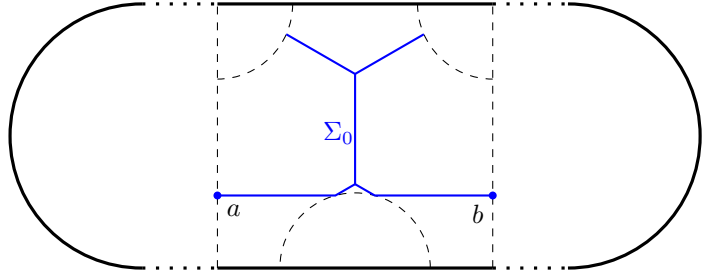

Define $\Sigma_0$ to be the locally minimal tree depicted in Fig.~\ref{stadion}. 
Let $\Sigma'$ consist of copies of $\Sigma_0$, glued at points $a$ and $b$ along the stadium. Note that $F_M(\Sigma') \leq r$ by the construction.
In the case $R < 1.75r$ the length of $\Sigma_0$ is strictly smaller than $2|ab|$. 
Thus for a long enough stadium $\Sigma'$ has length $\alpha L + O(1)$, where $L$ is the length of the stadium and $\alpha < 2$ is a constant depending on $\Sigma_0$ and $R$. 
On the other hand, any horseshoe has length $2L + O(1)$. 

This example leads to the following problems.

\begin{question}
Find the minimal $\alpha $ such that Theorem~\ref{horseshoeT} holds with the replacement of $5r$ with $\alpha r$.
\end{question}
 
\begin{question}
Describe the set of $r$-minimizers for a given stadium.
\label{problemstadium}
\end{question}

Analogously to the stadium case one can easily show that for a narrow rectangle (i.e. with a sufficiently small $\frac{|a_1a_2|}{|a_2a_3|}$) and some $r>0$ a minimizer should have another topology than depicted at~Fig.~\ref{rectangle}.

Also one may consider the following relaxation of Problem~\ref{problemstadium}.
\begin{question}
Fix a real $a > 2r$. Let $M(l)$ be the union of two sides of length $l$ of a rectangle $a \times l$ and $\Sigma(l)$ be a minimizer for $M(l)$. Find
\[
s(a) := \lim_{l \to \infty} \frac{\H (\Sigma(l))}{l}.
\]
\end{question}

If $a > 10r$ one may add up $M(l)$ to a stadium and use Theorem~\ref{horseshoeT} to get $s(a) = 2$.

\subsection{Uniqueness}

Recall that if $\Sigma$ be an $r$-minimizer for some $M$, then it is a minimizer for $\overline{B_r(\Sigma)}$.
This motivates the following question.

\begin{question}
Let $\Sigma$ be an $r$-minimizer for some $M$. Is $\Sigma$ the unique $r$-minimizer for $\overline{B_r(\Sigma)}$?
\end{question}
A weaker form of this question is if we replace $r$ with some positive $r_0 < r$ in the hypothesis.

Again we are interested whether Proposition~\ref{pr:uniq} holds in larger dimensions.

\begin{question}
Fix $d \geq 3$ and $n \geq 4$.
Find the Hausdorff dimension of $d$-dimensional $n$-point ambiguous configurations $M$ (as a subset of $\mathbb{R}^{dn}$).
\end{question}

A weaker question is to determine whether the set of $d$-dimensional $n$-point ambiguous configurations has measure zero.

Recall that we believe that every Steiner tree which is not unique for its $n$ terminals cannot be a maximal distance minimizer for any $n$-point set $M$ and any $r > 0$.
This motivated the following more general question.

\begin{question}
Fix $M \subset \mathbb{R}^d$ and $r > 0$.
Does a set $G_\Sigma$ of energetic points determine a minimizer $\Sigma$?
\end{question}

\paragraph{Ackowledgements.} We thank Alexey Gordeev for the discussions.
Yana Teplitskaya is supported by the Simons Foundation grant 601941, GD.

\bibliographystyle{plain}
\bibliography{main.bib}

\begin{thebibliography}{10}

\bibitem{alvarado2020maximum}
Enrique~G. Alvarado, Bala Krishnamoorthy, and Kevin~R. Vixie.
\newblock The maximum distance problem and minimal spanning trees.
\newblock {\em International Journal of Analysis and Applications},
  19(5):633--659, 2021.

\bibitem{basok2018uniqueness}
Mikhail Basok, Danila Cherkashin, Nikita Rastegaev, and Yana Teplitskaya.
\newblock On uniqueness in {S}teiner problem.
\newblock {\em International Mathematics Research Notices},
  2024(10):8819--8838, 2024.

\bibitem{inverse2022}
Mikhail Basok, Danila Cherkashin, and Yana Teplitskaya.
\newblock Inverse maximal and average distance minimizer problems.
\newblock {\em arxiv preprint arXiv:2212.01903}, 2022.

\bibitem{BS1}
G.~Buttazzo and E.~Stepanov.
\newblock Optimal transportation networks as free {D}irichlet regions for the
  {M}onge--{K}antorovich problem.
\newblock {\em Ann. Sc. Norm. Super. Pisa Cl. Sci. (5)}, 2(4):631--678, 2003.

\bibitem{buttazzo2002optimal}
Giuseppe Buttazzo, Edouard Oudet, and Eugene Stepanov.
\newblock Optimal transportation problems with free {D}irichlet regions.
\newblock In {\em Variational methods for discontinuous structures}, pages
  41--65. Springer, 2002.

\bibitem{cherkashin2021maximal}
D.~D. Cherkashin, A.~S. Gordeev, G.~A. Strukov, and Y.~I. Teplitskaya.
\newblock Maximal distance minimizers for a rectangle.
\newblock {\em arXiv preprint arXiv:2106.00809}, 2021.

\bibitem{cherkashin2018horseshoe}
Danila Cherkashin and Yana Teplitskaya.
\newblock On the horseshoe conjecture for maximal distance minimizers.
\newblock {\em ESAIM: Control, Optimisation and Calculus of Variations},
  24(3):1015--1041, 2018.

\bibitem{cherkashin2020minimizers}
D.D. Cherkashin, A.S. Gordeev, G.A. Strukov, and Y.I. Teplitskaya.
\newblock On minimizers of the maximal distance functional for a planar convex
  closed smooth curve.
\newblock {\em arXiv preprint arXiv}, 2020.

\bibitem{garey1977complexity}
Michael~R. Garey, Ronald~L. Graham, and David~S. Johnson.
\newblock The complexity of computing {S}teiner minimal trees.
\newblock {\em SIAM Journal on Applied Mathematics}, 32(4):835--859, 1977.

\bibitem{gilbert1968steiner}
E.~N. Gilbert and H.~O. Pollak.
\newblock Steiner minimal trees.
\newblock {\em SIAM Journal on Applied Mathematics}, 16(1):1--29, 1968.

\bibitem{gordeev2022regularity}
Alexey Gordeev and Yana Teplitskaya.
\newblock On regularity of maximal distance minimizers in $\mathbb{R}^n$.
\newblock {\em To appear in The Annali della Scuola Normale Superiore di Pisa,
  Classe di Scienze}, 2024.

\bibitem{hwang1992steiner}
Frank~K. Hwang, Dana~S. Richards, and Pawel Winter.
\newblock {\em The {S}teiner tree problem}, volume~53.
\newblock Elsevier, 1992.

\bibitem{lemenant2011regularity}
Antoine Lemenant.
\newblock About the regularity of average distance minimizers in
  $\mathbb{R}^2$.
\newblock {\em J. Convex Anal}, 18(4):949--981, 2011.

\bibitem{lemenant2011presentationENG}
Antoine Lemenant.
\newblock A presentation of the average distance minimizing problem.
\newblock {\em Journal of Mathematical Sciences}, 181(6):820--836, 2012.

\bibitem{melzak1961problem}
Z.~A. Melzak.
\newblock On the problem of {S}teiner.
\newblock {\em Canadian Mathematical Bulletin}, 4(2):143--148, 1961.

\bibitem{miranda2006one}
M.~Miranda, Jr., E.~Paolini, and E.~Stepanov.
\newblock On one-dimensional continua uniformly approximating planar sets.
\newblock {\em Calc. Var. Partial Differential Equations}, 27(3):287--309,
  2006.

\bibitem{mosconi2005gamma}
Sunra J.~N. Mosconi and Paolo Tilli.
\newblock ${\Gamma}$-convergence for the irrigation problem.
\newblock {\em J. Convex Anal}, 12(1):145--158, 2005.

\bibitem{neumann1998reflections}
Peter~M. Neumann.
\newblock Reflections on reflection in a spherical mirror.
\newblock {\em The American Mathematical Monthly}, 105(6):523--528, 1998.

\bibitem{oblakov2009non}
Konstantin~I. Oblakov.
\newblock Non-existence of distinct codirected locally minimal trees on a
  plane.
\newblock {\em Moscow University Mathematics Bulletin}, 64(2):62--66, 2009.

\bibitem{paolini2004qualitative}
Emanuele Paolini and Eugene Stepanov.
\newblock Qualitative properties of maximum distance minimizers and average
  distance minimizers in $\mathbb{R}^n$.
\newblock {\em Journal of Mathematical Sciences}, 122(3):3290--3309, 2004.

\bibitem{rubinstein1997steiner}
Joachim~H. Rubinstein, Doreen~A. Thomas, and Nicholas~C. Wormald.
\newblock Steiner trees for terminals constrained to curves.
\newblock {\em SIAM Journal on Discrete Mathematics}, 10(1):1--17, 1997.

\bibitem{santambrogio2005blow}
Filippo Santambrogio and Paolo Tilli.
\newblock Blow-up of optimal sets in the irrigation problem.
\newblock {\em The Journal of Geometric Analysis}, 15(2):343--362, 2005.

\bibitem{tilli2010some}
Paolo Tilli.
\newblock Some explicit examples of minimizers for the irrigation problem.
\newblock {\em J. Convex Anal}, 17(2):583--595, 2010.

\end{thebibliography}

\end{document}